\documentclass[12pt]{amsart}
\usepackage{comment}
\usepackage{nicematrix}
\usepackage{adjustbox}
\usepackage{longtable}
\usepackage{algorithm}
\usepackage{algpseudocode}

\usepackage{pifont}
\usepackage{booktabs}
\graphicspath{{figures/}}

\usepackage{minted}

\newcommand{\Lazypic}[2]{\begin{minipage}{#1} \vspace{0.1cm} \centering {#2}\vspace{0.1cm}\end{minipage}}

\DeclareFontFamily{U}{matha}{\hyphenchar\font45}
\DeclareFontShape{U}{matha}{m}{n}{
      <5> <6> <7> <8> <9> <10> gen * matha
      <10.95> matha10 <12> <14.4> <17.28> <20.74> <24.88> matha12
      }{}
\DeclareSymbolFont{matha}{U}{matha}{m}{n}
\DeclareMathSymbol{\odiv}         {2}{matha}{"63}
\usepackage[margin=1in]{geometry}

\usepackage{amsmath,amsfonts,yhmath,hyperref}
\usepackage{amsthm}
\usepackage{amssymb}
\usepackage[latin9]{inputenc}
\usepackage{mathtools}
\usepackage{enumitem}
\usepackage{tikz-cd}
\usepackage{multirow}
\usepackage{makecell}
\usepackage{xcolor}
\usepackage{float}
\usepackage{array}
\usepackage{mathrsfs}

\usepackage{pgf,tikz}
\usetikzlibrary{calc}
\usetikzlibrary{arrows}
\usetikzlibrary[patterns]
\usetikzlibrary{shapes}
\definecolor{qqqqff}{rgb}{0.,0.,1.}
\definecolor{cqcqcq}{rgb}{0.7529411764705882,0.7529411764705882,0.7529411764705882}
\definecolor{ttqqqq}{rgb}{0.2,0.,0.}
\definecolor{qqqqff}{rgb}{0.,0.,1.}
\definecolor{xdxdff}{rgb}{0.49019607843137253,0.49019607843137253,1.}
\definecolor{zzttqq}{rgb}{0.6,0.2,0.}
\definecolor{cqcqcq}{rgb}{0.7529411764705882,0.7529411764705882,0.7529411764705882}

\definecolor{yqyqyq}{rgb}{0.5019607843137255,0.5019607843137255,0.5019607843137255}
\definecolor{uuuuuu}{rgb}{0.26666666666666666,0.26666666666666666,0.26666666666666666}
\definecolor{xdxdff}{rgb}{0.49019607843137253,0.49019607843137253,1.}
\definecolor{qqqqff}{rgb}{0.,0.,1.}

\definecolor{darkgreen}{rgb}{0.0, 0.5, 0.13}
\definecolor{darkdarkgreen}{rgb}{0.0, 0.25, 0.065}

\newcommand{\NN}{\mathbb{N}}
\newcommand{\ZZ}{\mathbb{Z}}
\newcommand{\RR}{\mathbb{R}}

\newcommand{\QQ}{\mathbb{Q}}

\newcommand{\B}{\mathcal{B}}

\renewcommand{\P}{\mathcal{P}}

\newcommand{\M}{\mathcal{M}}
\newcommand{\Z}{\mathcal{Z}}

\newcommand{\C}{\mathcal{C}}

\renewcommand{\O}{\mathcal{O}}
\newcommand{\F}{\mathcal{F}}
\newcommand{\I}{\mathcal{I}}

\newtheorem{manualtheoreminner}{Theorem}
\newenvironment{manualtheorem}[1]{%
	\renewcommand\themanualtheoreminner{#1}%
	\manualtheoreminner
}{\endmanualtheoreminner}

\DeclareMathOperator{\rk}{rk}

\DeclareMathOperator{\conv}{conv}

\newcommand{\suchthat}{\hspace{4pt}|\hspace{4pt}}

\usepackage{youngtab}

\newtheorem{theo}{Theorem}[section]
\newtheorem{theorem}[theo]{Theorem}

\newtheorem{proposition}[theo]{Proposition}

\newtheorem{corollary}[theo]{Corollary}

\newtheorem{lemma}[theo]{Lemma}

\newtheorem{proposition-definition}[theo]{Proposition-Definition}

\theoremstyle{definition}

\newtheorem{definition}[theo]{Definition}

\theoremstyle{remark}
\newtheorem{remark}[theo]{Remark}
\newenvironment{rem}[1]{
    \begin{remark}#1}{
    \xqed{\blacklozenge}\end{remark}
}

\theoremstyle{remark}
\newtheorem{example}[theo]{Example}

\newcommand{\xqed}[1]{
    \leavevmode\unskip\penalty9999 \hbox{}\nobreak\hfill
    \quad\hbox{\ensuremath{#1}}}

\newcommand{\chainZ}{\mathscr{C}_{\mathcal{Z}}}
\usepackage{ytableau}

\usepackage{cleveref}

\usepackage{bm}
\usepackage{bbm}
\usepackage[scr=boondoxo]{mathalpha}

\title{The polytope of all matroids in ranks 2 and 3}
\author{Narayan Collins and Victoria Schleis}
\date{May 2026}

\begin{document}
\begin{abstract}
     We give explicit recursive constructions for the \emph{polytope of all matroids} $\Omega_{r,n}$ in ranks 2 and 3 for all ground set sizes. This polytope was introduced in recent work by Ferroni and Fink as a tool for checking positivity conjectures for valuative invariants. We supplement our theoretical construction by an implementation, which allows for the computation of $\Omega_{2,n}$ for $n\leq 33$ and $\Omega_{3,n}$ for $n\leq 10$. Further, we compute Schubert expansions for all isomorphism classes of matroids of rank $2$ up to $n = 80$, and for rank $3$ up to $n = 11$. 
\end{abstract}
\maketitle
\section{Introduction}
In matroid theory, and, in particular, in combinatorial Hodge theory, many open problems can be reinterpreted as the non-negativity of certain matroid invariants. 

To this end, Ferroni and Fink \cite{FerroniFink2025} introduced the \emph{polytope of all matroids} as the parameter polytope for Schubert expansions of isomorphism classes of matroids of a given rank and on a given (finite) ground set.  The decomposition of the indicator functions of matroid polytopes as integer expressions of indicator functions of Schubert matroids goes back to Derksen and Fink \cite{DerksenFink2010}. 

On the polytope of all matroids, positivity conjectures of valuative invariants can be tested efficiently -- if they are satisfied on all matroid isomorphism classes whose associated lattice points as Schubert expansions are vertices of the polytope of all matroids (called \emph{extremal matroids}), they are satisfied on all matroids.

In this paper we explicitly compute the polytope of all matroids in ranks 2 and 3. For matroids of rank 2 provide a careful analysis of all Schubert expansions for all possible matroid isomorphism classes in Section \ref{sec poam rk 2}. As a result, we give a recursive construction of the polytope of all matroids for arbitrary finite ground sets $[n]$.

\begin{manualtheorem}{A}\label{thm schubert polytope intro}[{c.f. Theorem \ref{thm schubert polytope}}]
The polytope $\Omega_{2,n} \coloneqq \conv(\O_{2, n})$, defined as the convex hull of the columns of the matrix $\O_{2,n}$ as described in \eqref{eq o2n} is the polytope of all matroids of rank $2$ on $n$ elements.
\end{manualtheorem}
Through our recursive construction of $\O_{2,n}$, we can gain more insight into extremal matroids of rank two. In general, Ferroni and Fink show that there are more isomorphism classes of extremal matroids than there are vertices of $\Omega_{r,n}$. However, we show that in rank 2, this is a one-to-one correspondence in Corollary \ref{cor extremal}, and provide a database of these extremal matroids in our computational supplement. 

In rank $3$, the polytope of all matroids is more challenging to compute. In Section \ref{sec poam rk 3} we analyse Schubert expansions of matroids of rank 3. Here, our recursive algorithm works by first constructing all simple matroids of rank 3 and computing their Schubert coefficients. Then, we generalise by tracking the changes to Schubert coefficients arising when constructing the simplification of a matroid. Incrementally, this allows us to compute all possible Schubert expansions of rank 3 matroids, and hence the entire polytope of all matroids. 

\begin{manualtheorem}{B} \label{poam 3 intro}[{c.f. Theorem \ref{thm main rk 3}}]
    Let $n \geq 4$, and let $\O_{3,n}$ be the matrix computed in Algorithm \ref{alg:cap}. The polytope $\Omega_{3,n} \coloneqq \conv ( \O_{3,n})$ is the polytope of all matroids of rank $3$ on $n$ elements.
\end{manualtheorem}

We supplement our theoretical results with implementations of the recursive formulas in \texttt{julia}, using \texttt{Oscar} \cite{Oscar} for the computation of the polytope from $\O_{k,n}$. Our implementation can be found at 

\begin{center}
\url{https://github.com/VictoriaSchleis/polytope_of_all_matroids}. 
\end{center}
Our constructions allow us to compute the polytope of all matroids for surprisingly large ground sets: 
\begin{itemize}
    \item In rank $2$ the polytope itself can be computed up to ground set size $33$, and its generating lattice points can be computed up to ground set size 80, see Section \ref{sec implementation}. Combined with Corollary \ref{cor extremal} this allows us to substantially expand the data on extremal matroids provided by Ferroni and Fink.  
    \item In rank $3$, the polytope itself can be computed up to ground set size $10$, whereas its generating lattice points can be additionally computed for $n=11$, see Section \ref{subsec poam rk 3}.
\end{itemize}

\begin{center}
    \textsc{Acknowledgements}
\end{center}
We thank Alex Fink and Yue Ren for helpful conversations, and thank Luis Ferroni for asking the questions that inspired this paper. 

\emph{Funding. } V.S. was funded by the UKRI FLF grant \emph{Computational Tropical Geometry}  \\ MR/Y003888/1. During parts of this project, V.S. was a member of the Institute for Advanced Study, Princeton,  funded by the Charles Simonyi Endowment. 

\section{Schubert expansions}
After a brief review of notation, Schubert matroids,  and cyclic chain lattices in Section \ref{sec cyclic chain lattices},  we discuss Schubert expansions in Section \ref{sec Schubert expansion}. These will be the main ingredient in our construction of the polytopes of all matroids. We conclude by recalling the definition of the polytope of all matroids. We assume the reader to be familiar with basic matroid theory and refer  to \cite{oxley2011} for a thorough introduction. 

Throughout, we write $[n]$ for the set $\{1, 2, \dots, n\} \subset \mathbb{N}$, and $[a, b]$ for the set $\{a, a+1, \dots, b-1, b \} \subset \mathbb{Z}$. As is standard, when it is clear from context, we may omit braces and commas when describing subsets of $[n]$.

Let $M$ be a matroid of rank $r$ over $[n]$. We then write $\rk_M$ for its rank function and
 $\mathcal{I}(M)$ for its set of \emph{independent sets}, i.e. $$\I(M) = \{I \subseteq [n] \suchthat  \rk_{M}(I) = |I|\}.$$
  \emph{Circuits} are minimal dependent sets of matroids, i.e., $$\C(M) = \{C \subseteq[n]\suchthat \{ C\setminus i \} \in \I(M) \text{ for all } i\in C \}.$$ 
  Further, the \emph{lattice of flats} $\mathcal{L}(M)$ is the lattice (ordered by inclusion) of sets closed under the rank function, i.e., $F\in \mathcal{L}(M)$ if and only if $\rk(F)< \rk(F\cup i)$ for all $i\in [n]\setminus F.$ 

We consider two special subsets of the lattice of flats. We write  $$L(M) = \{i \suchthat \rk_M(i) = 0\}$$ for its set of \emph{loops} and
 $P(M)$ for the set of \emph{parallel classes} of $M$, i.e., 
    $$P(M) = \{F \setminus L(M) \suchthat F \text{ is a flat of rank } 1\}.$$

The \emph{base polytope} $\P(M)$ is the convex hull of the indicator vectors of elements in $\B(M)$, i.e. 
$
\mathcal{P}(M) = \conv_{B \in \mathcal{B}}\left( \sum_{i \in B} \bm{e}_{i} \right) \subset \mathbb{R}^{n},$
where $\bm{e}_{i}$ is the $i$-th standard basis vector.

\subsection{Cyclic chain lattices} \label{sec cyclic chain lattices}
In this subsection, we discuss cyclic flats, their lattices, and their chain lattices. 

\begin{definition}
    A flat of a matroid is called \emph{cyclic} if it is a union of circuits. 
\end{definition}

Similar to the flats of a matroid, the cyclic flats can be arranged as a lattice. Denoting the \emph{lattice of cyclic flats} of a matroid $M$ as $\mathcal{Z}(M)$.
\emph{Schubert matroids} are matroids whose lattice of cyclic flats form a chain.

\begin{remark}\label{schubertcount}
   Schubert matroids of size $n$ and rank $k$ are in bijection to increasing lattice paths in a rectangle  from $(0, 0)$ to $(n-k, k)$.  We write $S(x_{1}, \dots, x_{k})$ for the Schubert matroid of rank $k$ in bijection to the lattice path with corners at $(x_1-1, 0), (x_2-2, 1), \dots, (x_k-k, k)$.

   The bijection further implies that $|\mathscr{S}_{n, k}| =\binom{n}{k}$. Moreover, the number of loopless Schubert matroids on $n$ of rank $k$ is $\binom{n-1}{k-1}$, corresponding to the lattice paths whose first step is in vertical direction.   
\end{remark}

\begin{theorem}[{\cite[Theorem 3.2]{BoninDeMier2008}}] \label{thm properties z(m)}
Let $\mathcal{Z} \subseteq \mathcal{P}^{[n]}$, and let $r : \mathcal{Z} \rightarrow \mathbb{Z}$ be an integer valued function. Then there is a matroid $M$ of rank $r$ on $[n]$ with $ \mathcal{Z}(M) = \mathcal{Z}$  if and only if \label{caxioms}
\begin{itemize}
\item[\textbf{(Z0)}] $\mathcal{Z}$ is a lattice under inclusion.
\item[\textbf{(Z1)}] $r(0_{\mathcal{Z}})=0$.
\item[\textbf{(Z2)}] $0 < r(Y) - r(X) < |Y \setminus X|$ for all $X, Y \in \mathcal{Z}$ with $X \subset Y$.
\item[\textbf{(Z3)}] For all $X, Y \in \mathcal{Z}$,
\begin{align*}
r(X) + r(Y) \geq r(X \vee Y) + r(X \wedge Y) + |(X \cap Y) \setminus (X \wedge Y)|
\end{align*}
\end{itemize}
\end{theorem}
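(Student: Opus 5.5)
We prove the two directions separately. Throughout, recall that a cyclic flat is a flat that is a union of circuits, and that the lattice structure on $\mathcal{Z}(M)$ is given by $X \vee Y = \mathrm{cl}(X \cup Y)$ and by $X \wedge Y$ equal to the union of the circuits contained in $X \cap Y$.

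\emph{Necessity.} Let $M$ be a matroid and take $r = \rk_M$ restricted to $\mathcal{Z}(M)$.
\begin{itemize}
\item[(Z0)] The join and meet above are again cyclic flats. The closure of a union of circuits is a union of circuits. The union of the circuits inside the flat $X\cap Y$ is closed, because any $e$ in its closure lies in a circuit contained in $X \cap Y$. So $\mathcal{Z}(M)$ is a lattice.
\item[(Z1)] The bottom element is $\mathrm{cl}(\emptyset)$, the set of loops, which has rank $0$.
\item[(Z2)] The left inequality holds because $X$ is a flat properly contained in the flat $Y$. For the right inequality, $Y\setminus X$ is a union of circuit-pieces, so $M|Y / X$ has no coloops. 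A matroid without coloops on $m>0$ elements has rank $<m$, giving $r(Y)-r(X) < |Y\setminus X|$.
\item[(Z3)] Apply submodularity to get $r(X)+r(Y) \ge r(X\cup Y) + r(X\cap Y) = r(X\vee Y)+r(X\cap Y)$. Then note that $X\cap Y$ is obtained from its cyclic part $X\wedge Y$ by adding elements that lie in no circuit of $M|(X\cap Y)$, i.e.\ coloops of $M|(X\cap Y)$. Hence $r(X\cap Y) = r(X\wedge Y)+|(X\cap Y)\setminus(X\wedge Y)|$.
\end{itemize}

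\emph{Sufficiency.} Given $(\mathcal{Z}, r)$ satisfying (Z0)--(Z3), define
$$\rho(A) = \min_{X \in \mathcal{Z}} \bigl( r(X) + |A \setminus X| \bigr)$$
and show that $\rho$ is a matroid rank function whose cyclic flats are exactly $\mathcal{Z}$, with $\rho = r$ on $\mathcal{Z}$.

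The properties $0\le\rho(A)\le|A|$ and monotonicity are routine. Nonnegativity uses (Z1) and (Z2), which give $r(X)+|A\setminus X|\ge r(0_{\mathcal Z}) + |A\setminus 0_{\mathcal Z}|$-type comparisons; taking $X=0_{\mathcal Z}$ gives $\rho(A)\le |A|$.

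Submodularity is where (Z3) enters. Pick minimizers $X$ for $A$ and $Y$ for $B$. Then bound $\rho(A\cup B)$ using $X\vee Y$, and bound $\rho(A\cap B)$ using $X\wedge Y$. The counting identity needed is
$$|(A\cup B)\setminus (X\cup Y)| + |(A\cap B)\setminus (X\cap Y)| \le |A\setminus X| + |B\setminus Y|,$$
together with the fact that removing the extra elements of $X \cap Y$ outside $X\wedge Y$ costs exactly the term $|(X\cap Y)\setminus(X\wedge Y)|$ that (Z3) provides. Also $|(A\cup B)\setminus(X\vee Y)| \le |(A\cup B)\setminus (X\cup Y)|$.

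Next, check that $\rho(X)=r(X)$ for $X\in\mathcal{Z}$. This means that for every $Y \in \mathcal{Z}$ we need $r(Y)+|X\setminus Y|\ge r(X)$. Apply (Z3) to the pair $X,Y$ and then (Z2) to the chain $X\wedge Y\subseteq X$: this gives $r(X) \le r(X\wedge Y) + |X \setminus (X\wedge Y)|$, with equality only when $X\wedge Y = X$, and the claim follows. Each $X\in\mathcal{Z}$ is then a flat: adding $e\notin X$ increases $\rho$, by comparing minimizers and using the strict left inequality of (Z2). It is also cyclic: the strict right inequality of (Z2) shows that no $e\in X$ is a coloop of $M|X$.

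Conversely, let $F$ be a cyclic flat of $M$ and let $X$ be a minimizer for $\rho(F)$, chosen maximal. If $F \ne X\cap F$, the elements of $F\setminus X$ would be coloops of $M|F$, contradicting cyclicity; so $F \subseteq X$. Flatness of $F$ then forces $F = X$.

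The main obstacle is the submodularity step: choosing minimizers and handling the discrepancy between $X\cap Y$ and the lattice meet $X\wedge Y$ is exactly what the correction term in (Z3) is designed for, and the bookkeeping must be done carefully.
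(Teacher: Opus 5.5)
The paper gives no proof here (it only cites Bonin--de Mier), and your argument is correct and is the standard rank-function proof of this result. Necessity comes from submodularity together with the fact that $(X\cap Y)\setminus(X\wedge Y)$ consists of coloops of $M|(X\cap Y)$. Sufficiency comes from checking that $\rho(A)=\min_{X\in\mathcal{Z}}\bigl(r(X)+|A\setminus X|\bigr)$ is a rank function (with (Z3) driving submodularity), that it agrees with $r$ on $\mathcal{Z}$, and that its cyclic flats are exactly $\mathcal{Z}$.

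Two small presentation fixes. First, the comparison you write for nonnegativity, $r(X)+|A\setminus X|\ge r(0_{\mathcal{Z}})+|A\setminus 0_{\mathcal{Z}}|$, is false in general: take $A=X\neq 0_{\mathcal{Z}}$ and use (Z2). What you need is only $r(X)\ge r(0_{\mathcal{Z}})=0$, which follows from (Z1) and the left inequality of (Z2). Second, in the step $\rho(X)=r(X)$, and likewise in the flat and cyclicity checks, say explicitly that what you extract from (Z3) is $r(X\wedge Y)+|(X\cap Y)\setminus(X\wedge Y)|\le r(Y)-\bigl(r(X\vee Y)-r(X)\bigr)\le r(Y)$. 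This uses $r(X\vee Y)\ge r(X)$, and the inequality is strict exactly when $Y\not\subseteq X$, which is the strictness the flat check needs.
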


\begin{lemma}\label{crank}
Let $M$ be a matroid of rank $k$ over $[n]$. A set $S \subseteq [n]$ that contains all circuits of $M$ is a flat. Let $C$ denote the union of all circuits. Then, $C$ is the maximal element of $\mathcal{Z}(M)$, and $\rk(M) = r(C) +n - |C|$. 
\end{lemma}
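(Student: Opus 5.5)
The plan is to prove the three assertions of Lemma \ref{crank} one at a time, using only the closure axioms and circuit elimination. Throughout, $\rk$ and $r$ denote the rank function of $M$ restricted to $\mathcal{Z}(M)$.

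\emph{Step 1: any set containing all circuits is a flat.} Let $S\supseteq C$ and take $i\in[n]\setminus S$. Suppose, for contradiction, that $\rk(S\cup i)=\rk(S)$. Choose a basis $I$ of $S$. Then $I\cup i$ is dependent, so it contains a circuit $D$. Since $I$ is independent, $D\ni i$. But every circuit lies in $C\subseteq S$, so $i\in S$, a contradiction. Hence $\rk(S\cup i)>\rk(S)$ for every $i\notin S$, and $S$ is a flat.

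\emph{Step 2: $C$ is the maximum of $\mathcal{Z}(M)$.} By Step 1, $C$ is a flat. It is a union of circuits by definition, so it is a cyclic flat. Every cyclic flat is a union of circuits, and each of these circuits lies in $C$, so every cyclic flat is contained in $C$. Thus $C$ is the top element $1_{\mathcal{Z}}$.

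\emph{Step 3: the rank formula.} The elements of $[n]\setminus C$ lie in no circuit; these are exactly the coloops. I will show $\rk([n])=\rk(C)+|[n]\setminus C|$ by adding the elements $e_1,\dots,e_m$ of $[n]\setminus C$ one at a time to a basis $I$ of $C$. Suppose that at some stage $I\cup\{e_1,\dots,e_j\}$ is dependent. Then it contains a circuit, and that circuit must contain some $e_\ell\notin C$, which is impossible. So $I\cup([n]\setminus C)$ is independent and spans $[n]$, because $I$ spans $C$. Therefore it is a basis, and $k=\rk(M)=r(C)+n-|C|$.

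There is no real obstacle here. The only point needing care is the standard fact used in Step 1: if $I$ is independent and $I\cup i$ is dependent, then a circuit containing $i$ exists inside $I\cup i$. This is immediate from the definition of circuits as minimal dependent sets.
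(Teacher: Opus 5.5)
Your proof is correct and follows essentially the same route as the paper. The paper argues that an element outside $S$ lies in no circuit, hence is a coloop and must raise the rank, and that all elements outside $C$ are coloops, which gives the rank formula. You unpack the coloop property directly via a circuit inside $I\cup i$ (resp.\ inside $I\cup([n]\setminus C)$), and you make explicit the maximality of $C$ in $\mathcal{Z}(M)$, which the paper leaves implicit.
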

\begin{proof}
Assume $S$ is not a flat. Then, there exists $e\in [n]\setminus S$ such that $\rk(S\cup e) = \rk(S)$. By assumption, $e$ is not contained in any circuit, and thus a coloop.  Since $e$ is a coloop, $\rk(S\cup e) > \rk(S)$, contradiction. All elements of the ground set not contained in $C$ are coloops. Thus, the rank of $M$ can be expressed as $\rk(M) = r(C) +n - |C|$.\qedhere
\end{proof}

\begin{definition}
Let $P$ be a finite poset containing a minimal and maximal element. The \emph{chain lattice} $\mathscr{C}(P)$ is the atomic lattice with maximal element $\hat{\bm{1}}$ whose elements are the chains of $P$ that include the minimal and maximal element, ordered by inclusion.

Given a lattice of cyclic flats $\mathcal{Z}(M)$ of a matroid $M$, we denote the chain lattice of $\mathcal{Z}(M)$ with $\mathscr{C}_{\mathcal{Z}}(M)$ and call it the \emph{cyclic chain lattice} of $M$.
\end{definition}
\begin{example}\label{ex1}
We consider $M = U_{1, 2} \oplus U_{1, 2}$. Since $M$ has rank 2, all circuits of $M$ contain at most 3 elements. The only circuits of the matroid are $\{1, 2\}$ and $\{3, 4\}$, so $\mathcal{Z}(M)$ is shown on the left of Figure \ref{czm}. To construct the cyclic chain lattice $\mathscr{C}_{\mathcal{Z}}(M)$, we extract the chains of $\mathcal{Z}(M)$ containing $\emptyset$ and $[4]$. Since $M$ has no loops, the minimal element of $\mathscr{C}_{\mathcal{Z}}(M)$ is $\emptyset  \prec [4]$. In this case, there are only 2 other chains, so the chain lattice is easily constructed, seen on the right of Figure \ref{czm}.
\end{example}

\begin{figure}
\centering
\begin{tabular}{c c}
\Lazypic{4cm}{
\begin{tikzpicture}[scale = 0.75]
\draw (0.25, -0.75) -- (0.75, -0.25);
\draw (-0.25, -0.75) -- (-0.75, -0.25);
\draw (0.75, 0.35) -- (0.35, 0.75);
\draw (-0.75, 0.35) -- (-0.35, 0.75);
\draw (0, -1) node{$\emptyset$};
\draw (-1, 0) node{$1 \  2$};
\draw (1, 0) node{$3 \ 4$};
\draw(0, 1) node{$[4]$};
\end{tikzpicture}}&\Lazypic{8cm}{
\begin{tikzpicture}[scale = 0.75]
\draw (0, 3) node{$\hat{\bm{1}}$};
\draw (-3,1.5) node{$\begin{pmatrix} [4] \succ  1 \ 2 \succ  \emptyset \end{pmatrix}$};
\draw (3, 1.5) node{$\begin{pmatrix} [4] \succ 3 \ 4 \succ  \emptyset \end{pmatrix}$};
\draw (0,0) node{$\begin{pmatrix} [4] \succ  \emptyset \end{pmatrix}$};
\draw (-2.25, 2) -- (-0.25, 2.75);
\draw (2.25, 2) -- (0.25, 2.75);
\draw(-2.25, 1) -- (-0.5, 0.5);
\draw(2.25, 1) -- (0.5, 0.5);
\end{tikzpicture}}
\end{tabular}
\caption{To the left, the lattice of cyclic flats $\mathcal{Z}(M)$, and to the right, the  cyclic chain lattice $\mathscr{C}_{\mathcal{Z}}(M)$, both seen in Example \ref{ex1}.} \label{czm}
\end{figure}

\begin{rem}
For a matroid $M$, every chain of $\mathcal{Z}(M)$ containing the maximal element $M$ and the minimal element (the union of all loops in the matroid) corresponds to a Schubert matroid of the same size and rank. 
\end{rem}

\subsection{Schubert expansion and the polytope of all all matroids} \label{sec Schubert expansion}
We now recall Schubert expansions and the definition of the polytope of all matroids. We begin by recalling the definition of the M\"obius function of a lattice. 

\begin{definition}
Let $\mathcal{L}$ be a lattice. Then for any $a, b \in \mathcal{L}$, we define the \emph{M\"obius function} 
\begin{align*}
\mu(a, b) = \begin{cases}
1 & \text{if} \ a=b ;\\
- \sum_{a \leq k < b} \mu(a, k) & \text{if} \ a<b; \\
0 & \text{otherwise}.
\end{cases}
\end{align*}

\end{definition}

\begin{definition}\label{def iso classes matroid}
    Let $M$ and $N$ be matroids of rank $r$ on $[n]$. We say that $M$ and $N$ are \emph{isomorphic} if there exists a bijection $f:[n]\rightarrow[n]$ such that $f(B) \in \mathcal{B}(N)$ for all $B\in \mathcal{B}(M)$ and $f(B') \in \mathcal{B}(M)$ for all $B'\in \mathcal{B}(N)$. 
    
    We denote the set of all isomorphism classes of matroids of rank $r$ on $[n]$ by $\mathscr{M}_{r,n}$, and the set of all isomorphism classes of Schubert matroids of rank $r$ over $[n]$ by $\mathscr{S}_{r,n}$.
\end{definition}

Let $X\subseteq \RR^n$ be a set and let $\overline{\mathbbm{1}}(X): \RR^n \rightarrow \ZZ$ denote the standard indicator function of $X$. The \emph{symmetrized indicator function} $\mathbbm{1}(X)\RR^n \rightarrow \QQ$ is defined by 
$$\mathbbm{1}(X)(x) = \frac{1}{n!}\sum_{\sigma\in S_n}\overline{\mathbbm{1}}(X)(\sigma(x)),$$
where $S_n$ denotes the $n$th symmetric group. Using this notion, we have that $\mathbbm{1}(\mathcal{P}(M)) = \mathbbm{1}(\mathcal{P}(\sigma(M)))$. This implies that $\mathbbm{1}(\mathcal{P}(M))$ is well-defined on isomorphism classes $[M]\in \mathscr{M}_{r,n}.$

Using the work of Ferroni and Fink, the symmetrized indicator function of the base polytope for any matroid can be decomposed as a linear combination of symmetrized indicator functions of polytopes of Schubert matroids:
\begin{theorem}[{\cite[Corollary 2.17]{FerroniFink2025}}]\label{thm: schubert_moebius}
Let $M \in \mathscr{M}_{r, n}$ be an isomorphism class of matroids. For each element $\C \in \mathscr{C}_{\mathcal{Z}}(M) \setminus \hat{\bm{1}}$, define $\lambda_{\C} := -\mu(\C, \hat{\bm{1}})$, and let $S_{\C} \in \mathscr{S}_{r, n}$ be the Schubert matroid isomorphism class whose lattice of cyclic flats is $\C$. Then
\begin{align*}
\mathbbm{1}(\mathcal{P}(M)) = \sum_{\C \in \mathscr{C}_{\mathcal{Z}}(M) \setminus \hat{\bm{1}}} \lambda_{\C}\mathbbm{1}\big(\mathcal{P}(S_{\C})\big),
\end{align*}
and this expression is unique across all Schubert matroids in $\mathscr{S}_{r, n}$.
\end{theorem}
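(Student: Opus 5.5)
The plan is to derive the identity from the Derksen--Fink / Bonin--de Mier description of the base polytope as an intersection of cyclic-flat constraints, followed by Möbius inversion on the chain lattice $\mathscr{C}_{\mathcal{Z}}(M)$. First I describe $\P(M)$ by its cyclic-flat inequalities:
\[
\P(M)=\Bigl\{x\in[0,1]^n : \textstyle\sum_i x_i = r,\ \sum_{i\in Z}x_i\le r(Z)\ \text{for all } Z\in\mathcal{Z}(M)\Bigr\}.
\]
This holds because every flat constraint $\sum_{i\in F}x_i\le r(F)$ is implied by the constraint of the cyclic closure of $F$, using $[0,1]$-bounds on the coloop part. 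For a chain $\C$ of $\mathcal{Z}(M)$ containing the minimal and maximal element, the polytope cut out by the constraints indexed by $\C$ alone is exactly $\P(S_\C)$. This is the Schubert matroid whose cyclic flats are the chain $\C$ with the restricted rank function. Validity of $S_\C$ as a matroid follows from Theorem~\ref{thm properties z(m)}: on a chain, (Z3) is trivial and (Z0)--(Z2) are inherited from $M$.

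Next I prove the unsymmetrized indicator identity pointwise. Write $P_\C$ for $\P(S_\C)$, so $P_{\C}\cap P_{\C'}=P_{\C\vee\C'}$ when $\C\cup\C'$ is again a chain. Fix $x$ in the hyperplane and cube, and let $T(x)=\{Z\in\mathcal{Z}(M): \sum_{Z}x_i\le r(Z)\}$. Then $x\in P_\C$ iff $\C\subseteq T(x)$, and $x\in\P(M)$ iff $T(x)=\mathcal{Z}(M)$. I need
\[
\sum_{\C\in\chainZ(M)\setminus\hat{\bm 1},\ \C\subseteq T(x)}-\mu(\C,\hat{\bm 1}) \;=\; [T(x)=\mathcal{Z}(M)].
\]
The chains inside $T(x)$ form a down-set, essentially the chain lattice of the induced subposet. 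The Möbius sum over such a down-set equals $1$ exactly when the down-set contains all atoms-chains forcing everything, and $0$ otherwise. That it vanishes otherwise is the crux.

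The main obstacle is this vanishing statement, a topological or combinatorial fact about the order complex. I would try to avoid it by invoking submodularity. The set of violated flats $\{Z: \sum_Z x_i>r(Z)\}$ is closed under $\vee,\wedge$-type operations by the argument behind (Z3), so the non-violated part has a structure making the relevant subcomplex contractible, e.g. via a closure/cone point. Failing that, I would cite Derksen--Fink's valuativity result for the integer expansion and identify the coefficients by comparing with this Möbius formula.

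Finally, I symmetrize both sides over $S_n$, which preserves the linear identity and replaces each $S_\C$ by its isomorphism class. For uniqueness, I would show that the symmetrized indicators of the $\binom nr$ Schubert classes are linearly independent. One option is to evaluate on suitable sorted points to obtain a triangular system indexed by lattice paths (Remark~\ref{schubertcount}). Alternatively, this follows from the Derksen--Fink result that Schubert classes form a basis of the symmetrized valuative group.
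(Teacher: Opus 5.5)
The paper gives no proof of this statement; it quotes it from Ferroni--Fink. So your outline has to stand on its own.

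\textbf{What is sound.} Most of the scaffolding works:
\begin{itemize}
\item $\mathcal{P}(M)$ is cut out by the hyperplane, the cube and the cyclic-flat inequalities. For a flat $F$ you want its \emph{cyclic part}, the union of the circuits inside $F$, not a ``cyclic closure''.
\item $P_{\mathcal{C}}=\mathcal{P}(S_{\mathcal{C}})$ follows from (Z0)--(Z3) on a chain.
\item Symmetrizing is harmless.
\item Uniqueness may be cited from Derksen--Fink.
\end{itemize}

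\textbf{The gap.} The one non-formal step is left unproved: that $\sum_{\mathcal{C}\subseteq T(x)}\lambda_{\mathcal{C}}=0$ when $T(x)\neq\mathcal{Z}(M)$. This is not a property of down-sets of a chain lattice. For $U_{1,2}\oplus U_{1,2}$ and $T=\{\emptyset,[4]\}$ the sum is $\lambda_{\emptyset\prec[4]}=-1$. So the vanishing must come from which sets $T$ can actually occur as $T(x)$.

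The mechanism you propose for this is false: the violated set is not closed under $\vee$ and $\wedge$. Take the rank-$3$ matroid on $\{p_1,p_2,a,b,c,d\}$ with $p_1\parallel p_2$ and $4$-point lines $X=\{p_1,p_2,a,b\}$ and $Y=\{p_1,p_2,c,d\}$. Take $x=(0.9,0.9,0.3,0.3,0.3,0.3)$. Then $\{p_1,p_2\}$, $X$ and $Y$ are violated, while $X\vee Y=E$ is not.

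In this example every proper cyclic flat is violated. So it is the violated part, not the non-violated part, whose topology must be controlled. The identity holds here only because $\mu(\emptyset\prec E,\hat{\bm{1}})=0$. Your fallback of citing Derksen--Fink and comparing coefficients is circular, since identifying the coefficients with $-\mu(\mathcal{C},\hat{\bm{1}})$ is exactly the missing identity.

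\textbf{How to close it.} Your instinct about a cone point is right; here is one way to make it work.
\begin{enumerate}
\item Identify $\mathscr{C}_{\mathcal{Z}}(M)\setminus\hat{\bm{1}}$ with the face poset, empty face included, of the order complex $\Delta$ of the proper part of $\mathcal{Z}(M)$. Then $\mu(\sigma,\hat{\bm{1}})=\tilde\chi(\operatorname{lk}_\Delta\sigma)$.
\item Exchange the order of summation. The inner alternating sum over subchains of a face $\rho$ that lie in $T(x)$ vanishes unless $\rho$ consists of violated flats. Hence, when both ends of $\mathcal{Z}(M)$ lie in $T(x)$ (otherwise both sides vanish trivially), $\sum_{\mathcal{C}\subseteq T(x)}\lambda_{\mathcal{C}}=-\tilde\chi(\Delta(V))$. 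Here $V$ is the set of violated proper cyclic flats. You therefore need $\Delta(V)$ to be contractible whenever $V\neq\emptyset$.
\item The correct consequence of (Z3) is supermodularity. For $x\in[0,1]^n$, the function $f(Z)=\sum_{i\in Z}x_i-r(Z)$ satisfies $f(X)+f(Y)\le f(X\vee Y)+f(X\wedge Y)$ on $\mathcal{Z}(M)$. The bound $x\ge 0$ handles $(X\vee Y)\setminus(X\cup Y)$, and $x\le 1$ absorbs the correction term $|(X\cap Y)\setminus(X\wedge Y)|$.
\item Hence the maximizers of $f$ form a sublattice; let $Z^*$ be the largest one. If $V\neq\emptyset$ then $Z^*\in V$. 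For $Z\in V$ we get $f(Z\vee Z^*)\ge f(Z)+f(Z^*)-f(Z\wedge Z^*)\ge f(Z)>0$.
\item Thus $Z\mapsto Z\vee Z^*$ is an order-preserving self-map of $V$ with $Z\le Z\vee Z^*\ge Z^*$. It contracts $\Delta(V)$ onto the cone point $Z^*$, so $\tilde\chi(\Delta(V))=0$.
\end{enumerate}
With this lemma in place, your pointwise identity and hence the whole argument go through.
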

This decomposition allows the construction of the polytope of all matroids. For a Schubert matroid $S$, we write
\begin{align*}
\lambda_{S} = \sum_{\substack{\C \in \mathscr{C}_{\mathcal{Z}}(M) \\ \C \cong \mathcal{Z}(S)}} \mu(\C, \hat{\bm{1}}).
\end{align*}
Using this decomposition, the sum in Theorem \ref{thm: schubert_moebius} can instead be rewritten in terms of Schubert matroids. We conclude by defining the polytope of all matroids.
\begin{definition}
For given $r, n \in \ZZ_{\ge 0}$,  label the elements of $\mathscr{S}_{r,n}$ by $S_{i}$, with $|\mathscr{S}_{r, n}| = N$. Now, for each $M \in \mathscr{M}_{r, n}$, we define $\bm{o}_{M} \in \mathbb{R}^{N}$ to be
\begin{align*}
(\bm{o}_{M})_{i} = a_{i}, \ \text{where} \ \mathbbm{1}(\mathcal{P}(M)) = \sum_{i=1}^{N} a_{i}\mathbbm(\mathcal{P}(S_{i})).
\end{align*} 
Then, the polytope of all matroids $\Omega_{r, n}$ is the convex hull
\begin{align*}
\Omega_{r, n} = \conv_{M \in \mathscr{M}_{r, n}}(\bm{o}_{M}).
\end{align*}
 For a set $\mathscr{A} \subseteq \mathscr{M}_{r, n}$, we write $\O(\mathscr{A})$ for the matrix of $(\bm{o}_{i})_{i \in \mathscr{A}}$. 
\end{definition}

\section{The polytope of all matroids of rank 2 } \label{sec poam rk 2}
In this section, we construct the polytope of all matroids of rank $2$. We start by classifying Schubert matroids of rank 2 in Theorem \ref{thm schubert rk 2} and continue by constructing non-Schubert matroids in Lemma \ref{prop non schubert rk 2}. We then characterise their Schubert expansions in Theorem \ref{thm schubert decomp rk 2} and conclude by constructing the polytope in Theorem \ref{thm schubert polytope}. We finish this section by explicitly determining the polytope of all matroids computationally for high ground set sizes in Section \ref{sec implementation}.

\subsection{Lattices of cyclic flats in rank 2}
To construct the polytope of all matroids, we describe the lattices of cyclic flats for all matroids of rank 2. We start by constructing the lattices of Schubert matroids. 

\begin{figure}
    \centering\begin{tabular}{c@{\hskip 0.7cm} | @{\hskip 0.7cm}c @{\hskip 0.7cm}|@{\hskip 0.7cm} c @{\hskip 0.7cm}|@{\hskip 0.5cm} c}
        \begin{tikzpicture}[scale = 0.75]
           \foreach \x in {0,1,2,3}
    \foreach \y in {0,1,2}
     {
    \fill[opacity = 0.4] (\x,\y) circle (3pt);
    }
    \draw[very thick] (0,0) -- (3,0) -- (3,2);
    \fill (0,0) circle (3pt);
    \fill (1,0) circle (3pt);
    \fill (2,0) circle (3pt);
    \fill (3,0) circle (3pt)
    ;\fill(3,1) circle (3pt);
    \fill(3,2) circle (3pt);
    
        \end{tikzpicture} & 
        \begin{tikzpicture}[scale = 0.75]
           \foreach \x in {0,1,2,3}
    \foreach \y in {0,1,2}
     {
    \fill[opacity = 0.4] (\x,\y) circle (3pt);
    }
    \draw[very thick] (0,0) -- (2,0) -- (2,2) -- (3,2);
    \fill (0,0) circle (3pt);
    \fill (1,0) circle (3pt);
    \fill (2,0) circle (3pt);
    \fill (2,2) circle (3pt)
    ;\fill(2,1) circle (3pt);
    \fill(3,2) circle (3pt);
    
        \end{tikzpicture}& 
        \begin{tikzpicture}[scale = 0.75]
           \foreach \x in {0,1,2,3}
    \foreach \y in {0,1,2}
     {
    \fill[opacity = 0.4] (\x,\y) circle (3pt);
    }
    \draw[very thick] (0,0) -- (2,0) -- (2,1) -- (3,1) --(3,2);
    \fill (0,0) circle (3pt);
    \fill (1,0) circle (3pt);
    \fill (2,0) circle (3pt);
    \fill (3,1) circle (3pt)
    ;\fill(2,1) circle (3pt);
    \fill(3,2) circle (3pt);
        \end{tikzpicture}& 
        \begin{tikzpicture}[scale = 0.75]
           \foreach \x in {0,1,2,3}
    \foreach \y in {0,1,2}
     {
    \fill[opacity = 0.4] (\x,\y) circle (3pt);
    }
    \draw[very thick] (0,0) -- (1,0) -- (1,1) -- (2,1) -- (2,2) -- (3,2);
    \fill (0,0) circle (3pt);
    \fill (1,0) circle (3pt);
    \fill (2,2) circle (3pt);
    \fill (1,1) circle (3pt)
    ;\fill(2,1) circle (3pt);
    \fill(3,2) circle (3pt);
        \end{tikzpicture} \\
        
        \textbf{(1)} & \textbf{(2)} ($k = 3$)  & \textbf{(3)}   ($k = 3$) & \textbf{(4)} ($k=2$, $l = 4$) 
    \end{tabular}
    \caption{Examples of the four types of lattice paths giving rise to the four different types of Schubert matroids in Theorem \ref{thm schubert rk 2}.}
    \label{fig lattice paths}
\end{figure}

\begin{theorem}\label{thm schubert rk 2}
    In rank 2, there are $ \frac{n}{2}(n-1)$ Schubert matroids, which can be separated into four classes using the combinatorics of their lattice paths.
\end{theorem}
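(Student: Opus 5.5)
The plan is to work entirely through the bijection of Remark \ref{schubertcount}: Schubert matroids of rank $2$ on $[n]$ correspond to monotone lattice paths from $(0,0)$ to $(n-2,2)$. The count is then immediate, since such a path is determined by choosing which $2$ of its $n$ steps are vertical, giving $\binom{n}{2} = \frac{n}{2}(n-1)$.

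For the classification, I will record a path by the positions of its two vertical steps, i.e. by the $x$-coordinates $a \le b$ (with $0 \le a \le b \le n-2$) at which the path rises from height $0$ to $1$ and from height $1$ to $2$. The natural case split follows the types in Figure \ref{fig lattice paths}.
\begin{itemize}
\item[\textbf{(1)}] $a=b=n-2$: both vertical steps occur at the end of the path.
\item[\textbf{(2)}] $a=b=k<n-2$: the two vertical steps are consecutive and occur before the end.
\item[\textbf{(3)}] $a<b=n-2$: the second vertical step is the final step.
\item[\textbf{(4)}] $a<b<n-2$: the vertical steps are separated by horizontal steps, and the path ends with a horizontal step.
\end{itemize}
These four conditions are mutually exclusive and exhaustive, which gives the partition. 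I will then tally each class to recover the total: class (1) has $1$ path, (2) has $n-2$, (3) has $n-2$, and (4) has $\binom{n-2}{2}$. Their sum is $1+2(n-2)+\binom{n-2}{2}=\binom{n}{2}$, which serves as a consistency check.

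To make the classes meaningful for what follows, I will describe each matroid through its chain of cyclic flats, reading off loops (horizontal steps at height $0$), the rank-$1$ cyclic flat (horizontal steps at height $1$), and coloops (horizontal steps at height $2$ are not coloops; rather, the final vertical steps with no later horizontal steps correspond to coloops). Concretely:
\begin{itemize}
\item[\textbf{(1)}] loops together with two coloops;
\item[\textbf{(2)}] $k$ loops together with $U_{2,n-k}$;
\item[\textbf{(3)}] loops, a parallel class, and a coloop;
\item[\textbf{(4)}] loops, a parallel class, and further elements forming a rank-$2$ cyclic flat, giving a chain of length three.
\end{itemize}
The main obstacle is fixing the convention that matches corner coordinates $S(x_1,x_2)$ to cyclic flats, since it is easy to get an off-by-one error. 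I would verify this on small cases such as $n=4$ against Example \ref{ex1} and Figure \ref{fig lattice paths}.
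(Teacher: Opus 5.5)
Your proposal is correct and is essentially the paper's argument: the count comes directly from the lattice-path bijection of Remark~\ref{schubertcount}, and your four shapes ($a=b=n-2$, $a=b<n-2$, $a<b=n-2$, $a<b<n-2$) are exactly the paper's types $S(n-1,n)$, $S(k,k+1)$, $S(k,n)$ and $S(k_1,k_2)$, with matching descriptions of loops, parallel class, coloops and chain of cyclic flats. Your tally $1+2(n-2)+\binom{n-2}{2}=\binom{n}{2}$ is a useful exhaustiveness check that the paper omits; one small correction to your reading convention is that the parallel class consists of the element at the first vertical step together with the elements at the horizontal steps at height $1$, not the height-$1$ horizontal steps alone.
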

\begin{proof}
The first part of the theorem follows directly from Remark \ref{schubertcount}. To characterize the different combinatorial types of lattices, we consider different types of lattice paths, depicted in Figure \ref{fig lattice paths}. 

\begin{enumerate}
\item The matroid $S(n-1, n)$ has only one basis, $\{n-1, n\}$. Its only circuits are the set of loops, so  $L(S(n-1, n)) =  [n-2]$ is the only element of $\mathcal{Z}(S(n-1, n))$.

\item The matroid $S(k, k+1 )$ has loops $L(S(k,k+1)) = [k-1]$. The elements of $[k, n]$ are pairwise independent, so no parallel classes exist, and any 3 element subset of these sets is a 3-circuit. Thus, the entire ground set is a union of circuits, and so $\mathcal{Z}(S(k, k+1 ))$ has no flats of rank 1.  
\end{enumerate}

Now, we consider lattice paths without vertical edges of length 2. 
\begin{enumerate}
\item[(3)] For $k<n-1$, we have $L(S(k,n)) = [k-1]$, as in type (2), but we have $$\B(S(k,n)) = \{ \{ a, n\} \suchthat a \in [k, n-1] \}.$$ Then, $[k, n-1]$ forms a parallel class. Further, $n$ is not included in any circuit, as it is the only coloop. 
Thus,  $\mathcal{Z}(S(k,n))$ is the lattice $[k-1] \prec [n-1]$.

\item[(4)] If $1 \leq k_1 < k_2 < n-1$, then $L(S(k_1,k_2)) = [k_1-1]$ as before. Further, each pair of elements in $[k_1, k_2-1]$  is dependent, i.e., $[k_1, k_2-1]$ is a parallel class. The elements of $[k_2, n]$ are the coloops of $S(k_1,k_2)$. This means that a set formed from any element of $[k_1, k_2-1]$ along with 2 elements in $[k_2, n]$ forms a circuit. So $[n]$ itself is a union of circuits, and $\mathcal{Z}(S(k_1,k_2))$ is the lattice $[k_1-1]\prec [k_2-1]\prec[n]$.\qedhere
\end{enumerate}

\end{proof}

To construct the polytope of all matroids, we now construct the lattices of cyclic flats for all non-Schubert matroids. 


Hence,  on a finite set $[n]$ we can construct a non-Schubert matroid as follows:
\begin{enumerate}
\item Consider two disjoint subsets $ L \sqcup C \subseteq [n]$  where  $|C| \geq 4$. 
Here, $L$ is the set of loops and $C$  the set of loopless rank $1$ subsets.
\item Separate $C$ into $n\geq 2$ disjoint sets $\{C_i\}_{1 \leq i \leq n}$, requiring that $|C_i| \geq 2$.
\item Construct the lattice $\mathcal{L}$ (ordered by inclusion) on the sets $\{ L, L \sqcup C_i, [n]\}$.
\end{enumerate}

\begin{lemma}\label{prop non schubert rk 2}
    The lattice $\mathcal{L}$ constructed above is the lattice of cyclic flats $\mathcal{Z}(M)$ of a non-Schubert matroid $M$ of rank $2$ on $[n]$. 
\end{lemma}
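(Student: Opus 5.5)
The plan is to verify the cyclic-flat axioms of Theorem~\ref{thm properties z(m)} (Bonin--de~Mier) for $\mathcal{L}$, equipped with an explicit rank function. I would take $\mathcal{Z} = \mathcal{L} = \{L,\, L\sqcup C_1, \dots, L\sqcup C_m,\, [n]\}$, writing $m \geq 2$ for the number of blocks (to avoid clashing with the ground set size $n$), and define
$$r(L) = 0, \qquad r(L\sqcup C_i) = 1, \qquad r([n]) = 2.$$
Once the axioms are checked, Theorem~\ref{thm properties z(m)} gives a matroid $M$ on $[n]$ with $\mathcal{Z}(M)=\mathcal{L}$. Its rank follows from Lemma~\ref{crank}: the maximal cyclic flat is $[n]$, so there are no coloops and $\rk(M) = r([n]) + n - n = 2$.

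The axiom checks go in the following order.
\begin{itemize}
\item[(Z0)] The sets $L\sqcup C_i$ are pairwise incomparable because the $C_i$ are disjoint and nonempty. Hence the join of two distinct ones is $[n]$ and their meet is $L$, while $L$ and $[n]$ are the bottom and top. So $\mathcal{L}$ is a lattice under inclusion.
\item[(Z1)] This is immediate, since $r(0_{\mathcal{Z}}) = r(L) = 0$.
\item[(Z2)] There are three kinds of covering pairs.
  \begin{itemize}
  \item For $L \subset L\sqcup C_i$ we need $0 < 1 < |C_i|$, which holds because $|C_i| \geq 2$.
  \item For $L\sqcup C_i \subset [n]$ we need $0 < 1 < |[n]\setminus (L\sqcup C_i)|$. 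This holds because the complement contains some other block $C_j$ with $|C_j|\geq 2$; this is exactly where $m\geq 2$ is used.
  \item For $L\subset [n]$ we need $0 < 2 < |[n]\setminus L|$. This holds because $[n]\setminus L \supseteq C$ and $|C|\geq 4$.
  \end{itemize}
\item[(Z3)] For comparable pairs $X\subseteq Y$ we have $X\vee Y = Y$, $X\wedge Y = X$ and $(X\cap Y)\setminus (X\wedge Y) = \emptyset$, so the inequality is an equality. The only incomparable pairs are $X = L\sqcup C_i$ and $Y = L\sqcup C_j$ with $i\neq j$. Then $X\cap Y = L = X\wedge Y$, so the correction term vanishes, and the inequality reads $1+1 \geq 2 + 0 + 0$, which holds.
\end{itemize}

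Finally, $M$ is not Schubert. Schubert matroids are by definition those whose lattice of cyclic flats is a chain, and $\mathcal{L}$ contains the incomparable elements $L\sqcup C_1$ and $L\sqcup C_2$.

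I expect the main subtlety to be (Z2) for the middle-to-top step. This is the only place where the hypotheses of at least two blocks and $|C_i|\geq 2$ interact. One should also note that elements of $[n]\setminus(L\sqcup C)$, if any, are harmless: they only enlarge $|[n]\setminus X|$ in (Z2) and lie in no middle flat. They become points not parallel to anything, rather than coloops, since $[n]$ is itself the top cyclic flat.
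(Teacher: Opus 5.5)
Your proposal is correct and follows the same route as the paper. Both verify the Bonin--de~Mier axioms (Z0)--(Z3) for $\mathcal{L}$ with rank values $0,1,2$ on the three levels, use the same case split in (Z2), and use the same observation in (Z3) that the meet of two middle elements is their intersection $L$. Your version is a bit more explicit than the paper's: you justify $|[n]\setminus(L\sqcup C_i)|\geq 2$ via a second block $C_j$, and you spell out why $M$ has rank $2$ and is not Schubert, both of which the paper leaves implicit.
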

\begin{proof}
We verify the axioms of Theorem \ref{caxioms}:
\begin{itemize}
\item[(Z0)] By construction, $\mathcal{L}$ is a poset with respect to inclusion. Further,  there is both a minimal element $L$ and a maximal element $[n]$, which are of rank 0 and 2 respectively. Thus, it remains to be shown that any two elements of rank 1 have a join and a meet. 

Let $X=L \sqcup C_m$ and $Y = L \sqcup C_n$, for $m \neq n$. To construct the join, note that for any set $Z\in \mathcal{L}$ containing both $X$ and $Y$, we have $Z \supseteq X \cup Y = L \sqcup C_n \sqcup C_m$. The only element of $\mathcal{L}$ that contains both $C_n$ and $C_m$ is $[n]$. Hence $X \vee Y = [n]$. 

For the meet, note that $X \wedge Y =  X \cap Y = L$. Hence $\mathcal{L}$ is a lattice under inclusion.
\item[(Z1)] $\hat{\bm{0}} = L$ since it is contained in every element of $\mathcal{L}$, and $r(L) = 0$ by definition.
\item[(Z2)] Since the lattice is ordered by inclusion, $X \subset Y \implies r(X) < r(Y)$.

If $r(Y) = 2$ and $r(X) = 0$ we have $Y=[n], X=L$, and since $[n] \supseteq C$ and $|C| \geq 4$ 
\begin{align*}
r([n]) - r(L) = 2  < 4 \leq |[n] \setminus L||.
\end{align*}
Otherwise, we have that $r(Y)-r(X) = 1$. Then, one of the sets $X$ and $Y$ is of the form $L \sqcup C_n$ and the other set is either $[n]$ or $L$. Since both $2 \leq |[n] \setminus (L \sqcup C_n)|$ and $2 \leq |C_n| = | (L \sqcup C_n) \setminus L|$,  the axiom follows.

\item[(Z3)]  In (Z1), we saw that for all $X, Y \in \mathcal{L}$, $X \wedge Y = X \cap Y$. As a result, we need only prove the inequality
\begin{align*}
r(X) + r(Y) \geq r(X \vee Y) + r(X \wedge Y).
\end{align*}
If $X \subseteq Y$, then $X \vee Y = Y$ and $ X \wedge Y = X$. Thus, the inequality is satisfied.  If $X$ and $Y$ are incomparable, then $X = L \sqcup C_n$ and $Y = L \sqcup C_m$ for some $m$ and $n$. Then $Y \vee X = [n] $ and $ Y \wedge X = L$, and $r(X)=r(Y)=1$. Hence
$r(X)+r(Y) = 2 = r(L) + r([n])$
as required. \qedhere
\end{itemize}

\color{black}
\end{proof}

\begin{proposition}\label{prop non schubert lof rk 2}
     Let $M$ be a coloopless non-Schubert matroid $M$ of rank $2$ on $[n]$. Its lattice of cyclic flats $\mathcal{Z}(M)$ has the following properties:
\begin{enumerate}
\item The entire ground set $[n]$ is the maximal cyclic flat.
\item Each cyclic flat $P$ of rank 1 satisfies $|P|\geq |L(M)|+2$.
\end{enumerate}
    In particular, the lattice of cyclic flats of every coloopless non-Schubert matroid of rank 2 arises by the construction described above. 
\end{proposition}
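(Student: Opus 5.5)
The plan is to read off the structure of $\mathcal{Z}(M)$ directly from the rank function, using Lemma \ref{crank} and the axioms of Theorem \ref{thm properties z(m)}. I then compare the result with the construction preceding Lemma \ref{prop non schubert rk 2}.

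\emph{Property (1).} Since $M$ is coloopless, every element lies in a circuit. Hence the union $C$ of all circuits is $[n]$. By Lemma \ref{crank}, $[n]$ is then the maximal element of $\mathcal{Z}(M)$, and it has rank $\rk(M)=2$.

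\emph{Structure of the lattice.} The minimal cyclic flat is always $L(M)$, of rank $0$. Any other cyclic flat is a flat of rank $1$ or $2$. In rank $2$ the only flat of rank $2$ is $[n]$, so the remaining cyclic flats all have rank $1$. A rank-$1$ flat has the form $P=L(M)\sqcup C_i$, where $C_i$ is a parallel class. Such a flat is cyclic exactly when each element of $C_i$ lies in a circuit inside $P$. The only circuits contained in a rank-$1$ flat are loops and pairs of parallel elements, so $P$ is cyclic if and only if $|C_i|\ge 2$. This gives property (2): $|P|\ge |L(M)|+2$. Distinct parallel classes are disjoint, so the rank-$1$ cyclic flats are $L\sqcup C_i$ for pairwise disjoint $C_i$ with $|C_i|\ge 2$.

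\emph{Matching the construction.} It remains to show that the number of such classes is at least $2$ and that $|C|\ge 4$, where $C=\bigsqcup C_i$. The key input is that $M$ is non-Schubert, so $\mathcal{Z}(M)$ is not a chain. If there were at most one rank-$1$ cyclic flat, then $\mathcal{Z}(M)$ would be $L\prec[n]$ or $L\prec L\sqcup C_1\prec[n]$, which is a chain. That would make $M$ Schubert, so there are at least two classes $C_1,C_2$. Each has size at least $2$, so $|C|\ge 4$. The ordering and lattice operations are by inclusion, exactly as in the construction, so $\mathcal{Z}(M)$ is the lattice $\mathcal{L}$ built there from $L=L(M)$ and the classes $C_i$.

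\emph{Main obstacle.} The delicate step is the implication ``$\mathcal{Z}(M)$ is a chain $\Rightarrow$ $M$ is Schubert.'' This holds because the paper defines Schubert matroids as exactly those whose lattice of cyclic flats is a chain. The other point needing care is the claim that the only circuits inside a rank-$1$ flat are loops and parallel pairs. This follows since any two non-loop elements of such a flat are dependent, and all remaining verifications are routine.
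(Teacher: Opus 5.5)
Your proof is correct, but it reaches (1) and (2) differently from the paper, and it proves more of the final clause.

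\textbf{Property (1).} The paper never uses colooplessness here. It argues that if the top cyclic flat had rank at most $1$, then (Z2) would force $\mathcal{Z}(M)$ to be a chain, contradicting non-Schubertness. So the top cyclic flat has rank $2$, and the rank formula $\rk(M)=r(C)+n-|C|$ of Lemma \ref{crank} then forces $C=[n]$. You instead use the hypothesis directly: coloopless means every element lies in a circuit. That is shorter. The paper's route, however, shows the coloopless hypothesis is redundant, since every non-Schubert matroid of rank $2$ is automatically coloopless.

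\textbf{Property (2).} The paper applies (Z2) to $L(M)\subsetneq P$, giving $1<|P\setminus L(M)|$. Your analysis of circuits inside a rank-$1$ flat is more elementary. It also gives the sharper statement that $L(M)\sqcup C_i$ is cyclic if and only if $|C_i|\ge 2$.

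\textbf{The ``in particular'' clause.} You supply the argument the paper leaves implicit. The only rank-$2$ flat is $[n]$, and non-Schubertness forces at least two rank-$1$ cyclic flats. Hence $|C|\ge 4$, and $\mathcal{Z}(M)$ matches the construction.
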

\begin{proof}
To see (1), we can equivalently show that the rank of the maximal element $\hat{\bm{1}}$ of must be 2, since Corollary \ref{crank} then imposes that $|[n] \setminus \hat{\bm{1}}| = 0$.

Assume $r(\hat{\bm{1}}) = 0$ or  $r(\hat{\bm{1}}) = 1$. Then, the lattice forms a chain using Theorem \ref{thm properties z(m)} as $\mathcal{Z}(M)$ has a unique minimal and maximal element.  
In either case, $M$ is a Schubert matroid by definition. Since the lattice of cyclic flats of non-Schubert matroids is not a chain, we obtain that  $r(\hat{\bm{1}}) = 2$ as required.


Property (2) follows directly from Theorem \ref{caxioms}\textbf{(Z2)}. \qedhere

\end{proof}

Using Proposition \ref{prop non schubert lof rk 2}, we observe that the number of isomorphism classes of matroids on $[n]$ of rank 2 then only depends on $|L|$, $C$ and the number of ways to partition $C$ into the $C_i$ sets.

\begin{theorem}\label{thm count rk 2}
Let $n \ge4$, and let $\rho(n)$ denote the number of unordered sums up to $n$ with components greater than or equal to 2. 
Then
\begin{align*}
|\mathscr{M}_{2, n}| = 2|\mathscr{M}_{2, n-1}| - |\mathscr{M}_{2, n-2}| + \rho(n).
\end{align*}
\end{theorem}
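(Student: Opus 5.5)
The plan is to reduce the count to partitions via the structure of rank-$2$ matroids, and then obtain the recurrence by taking a second difference. For a matroid $M$ of rank $2$ on $[n]$, the isomorphism class is determined by two pieces of data. The first is the number $l=|L(M)|$ of loops. The second is the multiset of sizes of the parallel classes $P(M)$, which partition $[n]\setminus L(M)$; coloops are exactly the parallel classes of size $1$. Every element outside $L(M)$ lies in exactly one parallel class, and the rank is $2$ precisely when there are at least two parallel classes. I would justify this in the paper's language by combining Theorem~\ref{thm schubert rk 2} (Schubert types (1)--(4)) with Lemma~\ref{prop non schubert rk 2} and Proposition~\ref{prop non schubert lof rk 2}: a coloopless non-Schubert matroid is given by $L$ together with the blocks $C_i$ of size $\ge 2$, and adding coloops amounts to adding blocks of size $1$.

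The key steps are as follows.

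\begin{itemize}
\item[(i)] Establish the bijection between $\mathscr{M}_{2,n}$ and pairs $(l,\lambda)$, where $0\le l\le n-2$ and $\lambda$ is a partition of $m=n-l$ into at least two parts. Surjectivity comes from the direct-sum construction $U_{0,l}\oplus\bigoplus_i U_{1,\lambda_i}$ truncated to rank $2$, or equivalently from the construction preceding Lemma~\ref{prop non schubert rk 2} extended by coloops. Injectivity holds because loops and parallel classes are isomorphism invariants.

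\item[(ii)] Writing $p(m)$ for the number of partitions of $m$, deduce
\[
|\mathscr{M}_{2,n}|=\sum_{m=2}^{n}\bigl(p(m)-1\bigr).
\]

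\item[(iii)] Take differences. For $n\ge 4$ this gives
\[
|\mathscr{M}_{2,n}|-|\mathscr{M}_{2,n-1}|=p(n)-1,
\]
and hence
\[
|\mathscr{M}_{2,n}|-2|\mathscr{M}_{2,n-1}|+|\mathscr{M}_{2,n-2}|=p(n)-p(n-1).
\]

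\item[(iv)] Use the classical identity that $p(n)-p(n-1)$ equals the number of partitions of $n$ with no part equal to $1$. The map that removes one part $1$ is a bijection from partitions of $n$ containing a $1$ onto all partitions of $n-1$. The number of partitions of $n$ with no part equal to $1$ is exactly $\rho(n)$.
\end{itemize}

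As a sanity check, $|\mathscr{M}_{2,2}|=1$, $|\mathscr{M}_{2,3}|=3$ and $|\mathscr{M}_{2,4}|=7$, and indeed $2\cdot 3-1+\rho(4)=5+2=7$.

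The main obstacle I expect is step (i): making the isomorphism classification rigorous from the cyclic-flat description the paper has set up. Schubert matroids with coloops, such as types (3) and (4), do not fit literally into the coloopless construction, so coloops must be handled separately. I would do this by noting that deleting all coloops of a rank-$2$ matroid yields a matroid of rank $2-c$, where $c$ is the number of coloops, and treating the cases $c=0,1,2$ individually. This amounts to matching each case to the corresponding Schubert type or to Proposition~\ref{prop non schubert lof rk 2}. One also has to be careful with the interpretation of $\rho(n)$: it must count the trivial one-part sum $n$ itself, which corresponds to the single-block contribution appearing in the difference. I would state this convention explicitly.
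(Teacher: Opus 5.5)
Your proof is correct, but it organizes the count differently from the paper.

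\textbf{How the two routes differ.} The paper never writes a closed formula. It splits $\mathscr{M}_{2,n}$ directly into three classes and adds the counts:
\begin{itemize}
\item matroids with a loop, matched with $\mathscr{M}_{2,n-1}$ by deleting a loop;
\item loopless matroids with an element forming a singleton parallel class (which the paper calls a coloop), matched with the loopless part of $\mathscr{M}_{2,n-1}$ by deleting that element;
\item loopless matroids whose ground set is covered by parallel classes of size at least $2$, counted by $\rho(n)$.
\end{itemize}
You instead prove $|\mathscr{M}_{2,n}|=\sum_{m=2}^{n}(p(m)-1)$ and take a second difference, so the recurrence reduces to the classical identity $p(n)-p(n-1)=\rho(n)$, and you get the closed form as a by-product.

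The combinatorics underneath is the same. Your first difference is the paper's loop case, and your bijection in (iv) is the paper's deletion of a singleton class. Your bookkeeping is, however, more reliable. Taken literally, the paper's middle class has one more member than $|\mathscr{M}_{2,n-1}|-|\mathscr{M}_{2,n-2}|$, namely $U_{1,n-1}\oplus U_{1,1}$, whose deletion has rank $1$. Its last class has only $\rho(n)-1$ members, because the one-part sum $n$ would give $U_{1,n}$. The two discrepancies cancel. Your route also makes explicit that $\rho(n)$ must count the one-part sum; the paper uses this convention only implicitly, in the ``$\rho(n)-1$ columns'' of the proof of Theorem~\ref{thm schubert polytope}. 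What the paper's split buys in return is that it mirrors the block structure of $\O_{2,n}$ in \eqref{eq o2n}, so the same case analysis is reused directly in the polytope construction.

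\textbf{Two small corrections.}
\begin{itemize}
\item Singleton parallel classes are not the same as coloops. For example, $U_{2,3}$ has three singleton classes and no coloop. In rank $2$, an element is a coloop only if there are exactly two parallel classes and it forms one of them by itself.
\item Invariance of loops and parallel classes under isomorphism gives well-definedness of $M\mapsto(l,\lambda)$, not injectivity. Injectivity holds because in rank $2$ the bases are exactly the pairs of non-loops lying in different parallel classes, so $M$ is determined by $L(M)$ and $P(M)$.
\end{itemize}
With that observation, step (i) is immediate. The detour through Schubert types and cyclic flats that you flagged as the main obstacle is not needed.
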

\begin{proof}
    First, assume that $|L(M)|>0$. Up to isomorphism, $n\in L(M)$. Then, the deletion $M\setminus n$ is a matroid over $[n-1]$, and can identify $M$ with the class $[M\setminus n]$ in $\mathscr{M}_{2, n-1}$.
    
    Now, assume that $|L(M)| = 0$ and that $|P(M)|< n$. Then $M$ has a coloop, which, up to isomorphism, is $n$. Again, $M\setminus n$ is a matroid over $[n-1]$ with the same parallel class structure. This way, we can identify $M$ with the class $[M\setminus n]$ in $\mathscr{M}_{2, n-1}$.
    Repeating the previous argument, there are $|\mathscr{M}_{2, n-1}| - |\mathscr{M}_{2, n-2}|$ loopless matroids in $\mathscr{M}_{2, n-1}$.

    Finally, assume $|L(M)|= 0$ and $|P(M)| =n.$ In this case, each isomorphism class of matroids corresponds to an unordered sum up to $n$ with components larger than 2.  This follows as two matroids $M_1$ and $ M_2$ of rank $2$ over $[n]$ are isomorphic if and only if they have the same number of parallel classes of each size. 
    
    Thus, each distinct class in $\mathscr{M}_{2, n}$ corresponds to a  distinct partition of $[n]$ into parallel classes, i.e., they each correspond to a distinct sum to $n$. There are hence $\rho(n)$ such classes. 

    Combining all cases, we obtain 
    \begin{align*}
|\mathscr{M}_{2, n}| = |\mathscr{M}_{2, n-1}| + (|\mathscr{M}_{2, n-1}| - |\mathscr{M}_{2, n-2}|) + \rho(n).
\end{align*}\qedhere
\end{proof}
\subsection{Schubert expansions of rank 2 matroids} Now that we have found all matroids of rank 2 on $[n]$ and their corresponding lattices of cyclic flats, we decompose the indicator functions of  their base polytopes into the indicator functions of base polytopes of Schubert matroids.

\begin{theorem}\label{thm schubert decomp rk 2}
    Let $M$ be a matroid of rank 2 over $[n]$. Let $l \coloneqq|L(M)|$ denote the number of loops in $M$, $p$ denote the number of parallel classes in $M$, and for each natural number $k \geq 2$, let $n_{k}$ denote the number of parallel classes in $M$ of size $k$. Then 
\begin{align*}
\mathbbm{1}(\mathcal{P}(M)) = (1-p)\mathbbm{1}(\mathcal{P}(S(\{ l+1, l+2\}))) + \sum^{n-1}_{k=2}n_{k} \mathbbm{1}(\mathcal{P}(S(\{ l+1, l+k+1 \}))).
\end{align*}
\end{theorem}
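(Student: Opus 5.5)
The plan is to apply Theorem \ref{thm: schubert_moebius} directly. This means computing the cyclic chain lattice $\mathscr{C}_{\mathcal{Z}}(M)$ and the Möbius values $\mu(\C,\hat{\bm{1}})$, and then matching each chain with the Schubert matroid $S(\cdot,\cdot)$ whose lattice of cyclic flats is that chain, using the classification in Theorem \ref{thm schubert rk 2}. Throughout, I read $p$ as the number of \emph{nontrivial} parallel classes, i.e.\ $p=\sum_{k\ge 2} n_k$. Singleton parallel classes are not cyclic flats and do not appear in the formula, so this is the reading under which the statement can hold.

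\textbf{Step 1: describe $\mathcal{Z}(M)$.} Write $L=L(M)$ with $|L|=l$, and let $C_1,\dots,C_p$ be the nontrivial parallel classes. By Proposition \ref{prop non schubert lof rk 2} and the construction preceding Lemma \ref{prop non schubert rk 2}, the cyclic flats of rank $1$ are exactly the sets $L\sqcup C_i$. Their ranks are $0$ for $L$ and $1$ for each $L\sqcup C_i$, and there is at most one further cyclic flat, the union of all circuits, which has rank $2$. I split into three cases.
\begin{enumerate}
\item[(a)] $M$ has no coloops. Then the top element is $[n]$, and $\mathcal{Z}(M)=\{L,\,L\sqcup C_1,\dots,L\sqcup C_p,\,[n]\}$. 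This includes $p=0$, where $\mathcal{Z}(M)=\{L,[n]\}$.
\item[(b)] $M$ has a coloop. Since $\rk(M)=2$ and, by Lemma \ref{crank}, the union of circuits then has rank at most $1$, we get $p\le 1$. If $p=1$, then $\mathcal{Z}(M)=\{L, L\sqcup C_1\}$ with $|C_1|=n-l-1$.
\item[(c)] $p=0$ and all non-loops are coloops. Then $l=n-2$ and $\mathcal{Z}(M)=\{L\}$.
\end{enumerate}

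\textbf{Step 2: compute the chain lattice and Möbius values.} In case (a) the chains containing both $L$ and $[n]$ are
\begin{itemize}
\item the bottom chain $\C_0=(L\prec[n])$, and
\item the chains $\C_i=(L\prec L\sqcup C_i\prec [n])$ for $i=1,\dots,p$,
\end{itemize}
together with the top element $\hat{\bm{1}}$. This gives a lattice of rank $2$ with $p$ atoms. Hence $\mu(\C_i,\hat{\bm 1})=-1$ for each $i$, and
\[
\mu(\C_0,\hat{\bm 1})=-\Bigl(1+\sum_{i=1}^{p}(-1)\Bigr)=p-1.
\]
So $\lambda_{\C_i}=1$ and $\lambda_{\C_0}=1-p$. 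In cases (b) and (c) the cyclic chain lattice consists of a single chain below $\hat{\bm 1}$, with coefficient $1$.

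\textbf{Step 3: identify the Schubert matroids.} I use the proof of Theorem \ref{thm schubert rk 2}.
\begin{itemize}
\item The chain $L\prec[n]$ with ranks $0,2$ and $|L|=l$ is the type (2) lattice of $S(l+1,l+2)$.
\item A chain $L\prec L\sqcup C_i\prec[n]$ with $|C_i|=k$ is the type (4) lattice $[k_1-1]\prec[k_2-1]\prec[n]$ with $k_1=l+1$ and $k_2=l+k+1$. This is $S(l+1,l+k+1)$, and grouping the $\C_i$ by $|C_i|$ produces the coefficients $n_k$.
\item In case (b), the chain $L\prec L\sqcup C_1$ is the type (3) lattice of $S(l+1,n)$. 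Since $n=l+k+1$ with $k=n-l-1$, this agrees with the formula for $p=1$, $n_k=1$, where the coefficient $1-p$ vanishes.
\item In case (c), the single chain is type (1), namely $S(n-1,n)=S(l+1,l+2)$, which agrees with the formula for $p=0$.
\end{itemize}

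I expect the main obstacle to be Step 3. One has to make sure that the isomorphism class of each chain, with its ranks and set sizes, really determines the lattice-path label, and that the degenerate cases, namely coloops and $k=n-l-1$, land on the same index set $\{l+1,l+k+1\}$ as the generic case. Summing over chains then gives the claimed expansion, and uniqueness is already provided by Theorem \ref{thm: schubert_moebius}.
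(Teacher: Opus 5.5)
Your proposal is correct and follows the paper's proof. The paper likewise takes $\C_0 = L(M)\prec[n]$ as the bottom of $\mathscr{C}_{\mathcal{Z}}(M)$, with the chains $L\prec L\sqcup C_i\prec[n]$ as atoms, reads off $\lambda_{\C_i}=1$ and $\lambda_{\C_0}=1-p$ from the Möbius function, and groups chains by $|C_i|$ via Theorem \ref{thm schubert rk 2}. Your reading of $p$ as the number of parallel classes of size at least $2$, and your separate check of the coloop cases (b) and (c), fill in points the paper leaves implicit, since its proof tacitly assumes $[n]$ is the top cyclic flat.
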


\begin{proof}


We determine the cyclic chain lattice $\mathscr{C}_{\mathcal{Z}}(M)$. The minimal element of $\mathscr{C}_{\mathcal{Z}}(M)$ is the chain $\C_0 \coloneqq L(M) \prec [n]$. The atoms are all chains of the form $\mathcal{C_i} \coloneqq L(M) \prec L(M) \sqcup C_i \prec [n]$ where $C_i$ is a parallel class of $M$. Observe that the cyclic chain lattice $\mathscr{C}_{\mathcal{Z}}(M)$ is isomorphic to the lattice of cyclic flats $\mathcal{Z}(M)$ described in Lemma \ref{prop non schubert rk 2}.

 By definition, $\hat{\bm{1}}$ covers each $\C_{i}$ for $i \neq 0$. Thus,  $\mu(\C_{i}, \hat{\bm{1}}) = -1$, meaning $\lambda_{\C_{i}} = 1$. It remains to compute $\mu(\C_0, \hat{\bm{1}})$. We have 
\begin{align*}
    \mu(\C_0, \hat{\bm{1}}) & = -(\mu(\C_{0}, \C_{0}) + \sum_{i = 1}^{p}\mu(\C_{i}, \C_{0}))  = -(1+  \sum_{i = 1}^{p}(-1)) = p -1.
\end{align*}
Consequently, we have $\lambda_{\C_{0}} = 1-p$. 

Recall that by Theorem \ref{thm schubert rk 2}, the isomorphism class of a Schubert matroid depends only on the size of its parallel classes. Thus, we sum the coefficients of isomorphic chains corresponding to isomorphic parallel classes, obtaining
\begin{align*}
\mathbbm{1}(\mathcal{P}(M)) = (1-p)\mathbbm{1}(\mathcal{P}(S(\{ l+1, l+2\}))) + \sum^{n-1}_{k=2}n_{k} \mathbbm{1}(\mathcal{P}(S(\{ l+1, l+k+1 \}))).
\end{align*}
\end{proof}

\subsection{The polytope of all matroids for {$\mathscr{M}_{2, n}$}}
Using the Schubert expansions of non-Schubert matroids described in Theorem \ref{thm schubert decomp rk 2}, we can now construct the polytope of all matroids in rank $2$. In \cite[Example 3.3]{FerroniFink2025}, it was computed that the polytope of all matroids of rank $2$ on four elements, $\Omega_{2,4}$ is given as the convex hull of
\begin{align} \label{eq omega24}
\O_{2, 4} = \begin{pmatrix}
1 & 0 & 0 & 0 & 0 & 0 & 0 \\
0 & 1 & 0 & 0 & 0 & 0 & 0 \\
0 & 0 & 1 & 0 & 0 & 0 & 0 \\
0 & 0 & 0 & 1 & 0 & 0 & 0 \\
0 & 0 & 0 & 0 & 1 & 0 & 2 \\
0 & 0 & 0 & 0 & 0 & 1 & -1 
\end{pmatrix}.
\end{align} 
Now, the matrix generating the polytope of all matroids  of rank 2 can be recursively constructed as follows. For $n \geq 4$, let $\mathcal{U}(n)$ be the matrix of $n-2$ rows with non-negative integer entries that consists of all distinct column vectors $\bm{u}_{i}$ such that
\begin{align*}
(n-1, n-2, \dots, 3, 2) \bm{u}_{i} = n.
\end{align*}
In other words, every column of $\mathcal{U}(n)$ represents an unordered sum to $n$ of natural numbers greater than or equal to 2  (excluding the trivial representation $n=n$), with repeated entries permitted. Now, define 

\begin{equation*} \mathcal{V}(n) \  = \ \ 
    \begin{tabular}{|c c|}
    \hline
    
       $ \bm{0}^{\top}  $  & \multicolumn{1}{|c|}{ }  \\ \cline{1-1} 
        & \multicolumn{1}{|c|}{$\mathcal{U}(n)$}   \\ 
        $\mathcal{V}(n-1)$ & \multicolumn{1}{|c|}{}   \\\cline{2-2}   & \multicolumn{1}{|c|}{$\bm{1}^{\top} - \bm{1}^{\top}\mathcal{U}(n)$}   \\ \hline
    \end{tabular}
\end{equation*}
\text{where we initialise with }  $\mathcal{V}(4) = \begin{pmatrix} 
0 & 2 & -1
\end{pmatrix}^T$. We then set

\begin{equation}\label{eq o2n}
    \O_{2,n} \ = \ \ \begin{tabular}{|c c c |}
    \hline
    
      $\O_{2, n-1}$& \multicolumn{2}{|c|}{ $\mathbf{0}$}  \\ \cline{1-3}
      \multicolumn{1}{|c|}{$\mathbf{0}$} &$I_{n-1}$ & \multicolumn{1}{|c|}{$\mathcal{V}(n)$} \\ \hline
      
    \end{tabular}.
\end{equation}
\begin{theorem}\label{thm schubert polytope}
The polytope $\Omega_{2,n} \coloneqq \conv(\O_{2, n})$, defined as the convex hull of the columns of the matrix in \eqref{eq o2n} is the polytope of all matroids of rank $2$ on $[n]$.
\end{theorem}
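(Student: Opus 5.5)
The plan is to show that the columns of $\O_{2,n}$ are exactly the vectors $\bm{o}_M$ for $M\in\mathscr{M}_{2,n}$, arranged in a fixed ordering of the Schubert coordinates. Convexity then gives $\Omega_{2,n}=\conv(\O_{2,n})$ directly from the definition. We argue by induction on $n$, with base case $n=4$ given by \eqref{eq omega24} from \cite[Example 3.3]{FerroniFink2025}.

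\textbf{Coordinates.} By Theorem \ref{thm schubert decomp rk 2}, the only Schubert classes that can appear in an expansion are the loopless-shifted ones $S(\{l+1,l+k+1\})$ with $k\ge1$; the class $S(n-1,n)$ appears only for $l=n-2$. We order the Schubert classes of rank $2$ on $[n]$ by grouping them according to the number of loops $l$. The classes with $l\ge1$ are identified with the Schubert classes on $[n-1]$ via deleting a loop, and these form the first block of rows. The $n-1$ loopless classes $S(1,k+1)$, $k=1,\dots,n-1$, form the last block. We order the last block so that its first row corresponds to the coloop-free $k=n-1$ class (the uniform-type $S(1,n)$ with parallel class of size $n-1$), and the rows then decrease in $k$ down to $k=1$, i.e.\ $S(1,2)=U_{2,n}$, in the last row. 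We will check this ordering against the shape of $\mathcal{V}(n)$: the row $\bm{1}^\top-\bm{1}^\top\mathcal{U}(n)$ is exactly $1-p$, and the rows of $\mathcal{U}(n)$ are the $n_k$ for $k=n-1,\dots,2$.

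\textbf{Column blocks.} We split $\mathscr{M}_{2,n}$ as in the proof of Theorem \ref{thm count rk 2}.
\begin{enumerate}
\item Matroids with a loop correspond bijectively, by deleting a loop, to $\mathscr{M}_{2,n-1}$. By Theorem \ref{thm schubert decomp rk 2}, the expansion only shifts $l$ by one, so these give the block $\binom{\O_{2,n-1}}{\mathbf{0}}$.
\item The $n-1$ loopless Schubert matroids give the identity block $I_{n-1}$, since each Schubert matroid expands as itself by uniqueness in Theorem \ref{thm: schubert_moebius}.
\item The loopless non-Schubert matroids give the block $\mathcal{V}(n)$. By Proposition \ref{prop non schubert lof rk 2}, every such matroid either has coloops, or is coloopless with all parallel classes of size $\ge2$ and at least two classes.
\end{enumerate}

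\textbf{Structure of $\mathcal{V}(n)$.} For case (3), the expansion in Theorem \ref{thm schubert decomp rk 2} depends only on $l=0$ and the multiset $(n_k)$, where coloops are parallel classes of size $1$ and hence do not contribute to any $n_k$ with $k\ge2$. A loopless matroid with a coloop has $n_{n-1}=0$, and its vector of $(n_2,\dots,n_{n-2})$ together with $1-p$ matches the corresponding vector for $M\setminus\text{coloop}$ on $n-1$ elements. This is why $\mathcal{V}(n)$ contains $\mathcal{V}(n-1)$ with a zero row prepended for $n_{n-1}$.

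The entry $1-p$ needs care, since coloops count as parallel classes. We must verify that $\mathcal{V}(n-1)$'s last row, computed as $1-p$ for the deleted matroid, agrees with $1-p$ for $M$ after adding one coloop. A priori this changes $p$ by one, which is the main obstacle. We resolve it by checking whether, in Theorem \ref{thm schubert decomp rk 2}, $p$ counts only nontrivial classes. The Möbius computation counts atoms of $\mathscr{C}_\mathcal{Z}$, which are cyclic flats, so singletons are excluded; hence $p=\sum_{k\ge2}n_k$ and adding a coloop leaves $p$ unchanged. We also check this against the initialisation $\mathcal{V}(4)=(0,2,-1)^T$, which corresponds to $U_{1,2}\oplus U_{1,2}$ with $n_2=2$ and $p=2$.

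The coloopless loopless non-Schubert matroids are exactly the partitions of $n$ into parts $\ge2$ with at least two parts, which are the columns of $\mathcal{U}(n)$, each with final row $1-p$. Matroids that are Schubert but with a coloop, such as $S(1,n)$, are already in the $I_{n-1}$ block and must not be double counted; we confirm that $\mathcal{V}$ only collects non-Schubert columns, i.e.\ those with $p\ge2$.

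Finally, distinct isomorphism classes give distinct columns by the uniqueness in Theorem \ref{thm: schubert_moebius}, so $\O_{2,n}$ lists every $\bm{o}_M$ exactly once, which completes the induction.
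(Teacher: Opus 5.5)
Your route is the paper's: induction from \eqref{eq omega24}, with the columns of $\O_{2,n}$ split into matroids with loops (the block $\O_{2,n-1}$ over $\mathbf{0}$), loopless Schubert matroids ($I_{n-1}$), and loopless non-Schubert matroids ($\mathcal{V}(n)$, split by whether some parallel class is a single point). Where the paper closes with a count against Theorem \ref{thm count rk 2}, you match the blocks directly. That matching already shows that the set of columns equals $\{\bm{o}_M : M\in\mathscr{M}_{2,n}\}$, which is all that $\Omega_{2,n}=\conv(\O_{2,n})$ requires. Your check that $p$ in Theorem \ref{thm schubert decomp rk 2} counts only classes of size at least $2$ is correct, and the paper leaves it implicit.

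The step that is wrong is your final sentence. The uniqueness in Theorem \ref{thm: schubert_moebius} says that a single function $\mathbbm{1}(\mathcal{P}(M))$ has only one expansion in the Schubert basis. It does not say that non-isomorphic matroids have different expansions, and that fails in general: the paper recalls Ferroni and Fink's two non-isomorphic rank-$3$ matroids giving the same point of $\Omega_{3,6}$.

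In rank $2$ your ``exactly once'' is nevertheless true, for a concrete reason. By Theorem \ref{thm schubert decomp rk 2}, a column is nonzero and supported on the Schubert classes with exactly $l$ loops, so it determines $l$. Its entries give $(n_k)_{k\ge 2}$, hence the number $n-l-\sum_k k\,n_k$ of one-element parallel classes, and hence the isomorphism class. (The paper reaches the same conclusion from pairwise distinctness of the columns together with the count.) Duplicate columns would not change the convex hull, so this does not affect the theorem, but it is exactly what Corollary \ref{cor extremal} rests on.

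Finally, throughout you must read `coloop' as `element forming a one-element parallel class', as the paper implicitly does. A loopless rank-$2$ matroid with a genuine coloop is $U_{1,n-1}\oplus U_{1,1}=S(1,n)$. That matroid is Schubert, and it is neither coloop-free nor uniform, contrary to your description of the first row. With the standard meaning, your case split in (3) would miss, for instance, two parallel pairs plus one further point on $[5]$.
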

\begin{proof}
We proceed by induction, initialising with $\Omega_{2,4}$ \eqref{eq omega24}. Assume that $\Omega_{2, n}$ is the polytope of all matroids of rank $2$ on $[n]$. Then, by construction, the $|\mathscr{M}_{2, n-1}|$  leftmost columns of  $\O_{2,n}$ correspond to the $|\mathscr{M}_{2, n-1}|$  matroids over $[n]$ with loops, which exist by Theorem \ref{thm count rk 2}. 

The next $n-1$ columns correspond to the $n-1$ loopless Schubert matroids over $[n]$ ordered by the number of steps between the first and second kink of their lattices, as identified in Theorem \ref{thm schubert rk 2}(2) and (4). 

It remains to be verified that the Schubert expansions of loopless non-Schubert matroids of rank $2$ are described by the columns of $\mathcal{V}(n)$. By Theorem \ref{thm schubert decomp rk 2}, substituting $l = 0$, every loopless matroid can be expressed by the number of elements in their parallel classes. There are two cases: Either, $M$ has at least one coloop, or it is coloop-free. 

If $M$ has at least one coloop, the union of all parallel classes is strictly contained in $[n]$. Thus, $M$ can be identified with a matroid  with the same parallel class structure in $\mathscr{M}_{2, n-1}$. By induction,  matroids of this type are recorded by $\mathcal{V}(n-1)$, corresponding to the left block of columns in $\mathcal{V}(n)$. There are 
$|\mathscr{M}_{2, n-1}| - |\mathscr{M}_{2, n-2}| - (n-2)$ columns in $\mathcal{V}(n-1)$, corresponding to the columns of $\O_{2, n-1}$ representing non-Schubert matroids on $[n-1]$ with no loops.

Finally, if $M$ is both loop- and coloop-free, every element of the ground set is contained in some parallel class. By the last part of Theorem \ref{thm count rk 2}, $M$ corresponds to an unordered sum up to $n$, and its Schubert expansion can be described in terms of this sum using Theorem \ref{thm schubert decomp rk 2}. Note that this decomposition corresponds precisely to a column in the right block of $\mathcal{V}(n)$, and that there are $\rho(n)-1$ such columns. 

Now, we have 
\[
|\mathscr{M}_{2, n-1}| +(n-1) + (|\mathscr{M}_{2, n-1}| - |\mathscr{M}_{2, n-2}| + 2 - n) + (\rho(n)-1) = 2 |\mathscr{M}_{2, n-1}|  - |\mathscr{M}_{2, n-2}| + \rho(n)
\]
nonidentical columns. By Theorem \ref{thm schubert decomp rk 2}, each column encodes a Schubert expansion of an isomorphism class of matroids. Further, by Theorem \ref{thm count rk 2}, $2 |\mathscr{M}_{2, n-1}|  - |\mathscr{M}_{2, n-2}| + \rho(n)$ is the number of isomorphism classes of matroids of rank $2$ over $[n]$. Thus, the statement follows. \qedhere

\subsection{Extremal matroids of rank 2}

When considering isomorphism classes of matroids it is of particular interest to discuss their extremal ones, as positivity conjectures can be checked on these classes already. 

\begin{definition}
    A matroid $M$ of rank $r$ on $[n]$ is called \emph{extremal} if the associated point of its isomorphism class, $\Omega([M])$ is a vertex of $\Omega_{r,n}$.
\end{definition}

In \cite{FerroniFink2025}, the authors construct two nonisomorphic matroids of rank $3$ whose isomorphism classes yield the same vertex of the polytope of all matroids $\Omega_{3,6}.$ In other words, in general there are more extremal matroids than there are vertices of $\Omega_{r,n}$. However, this is not the case in rank 2:

\begin{corollary}\label{cor extremal}
No two nonisomorphic matroids of rank two have the same Schubert expansion. In particular, the vertices of $\Omega_{2,n}$ are in one-to-one correspondence with the isomorphism classes of extremal matroids.
\end{corollary}
\begin{proof}
    This follows from the proof of Theorem \ref{thm schubert polytope} as the construction of the matrix $\O_{2,n}$ implies that there are no two identical columns, and each column in $\O_{2,n}$ is in one-to-one correspondence with an isomorphism class of matroids. Hence, no two nonisomorphic matroids have the same Schubert expansion. Thus, no two nonisomorphic matroids correspond to the same lattice point in $\RR^{|\mathscr{S}_{r,n}|}$. In particular, every class of extremal matroids is in one-to-one correspondence with a vertex of $\Omega_{2,n}$
\end{proof}

\subsection{Computing the polytope of all matroids of rank 2}\label{sec implementation}

    At \begin{center}
        \url{https://github.com/VictoriaSchleis/polytope_of_all_matroids},
    \end{center} we provide code that computes the polytope of all matroids of rank $2$ in \texttt{Oscar} \cite{Oscar} for large ground set sizes. The polytope itself can be computed for ground set size $33$ on a small machine using the function \texttt{polytope\_of\_all\_matroids\_rk\_2}. It is a polytope of dimension 527 generated as the convex hull of 53929 lattice points. In Table \ref{tab: extremal matroids}, we record the numbers of vertices of all $\Omega_{2,n}$ for $n\leq 25$. 
    
    However, forgoing the computation of the convex hull and using sparse matrices, the computation can be pushed much further: On a standard computer, the matrix $\O_{2,n}$ generating $\Omega_{2,n}$ can be computed up within a few minutes up to ground set size 65 and within a few days up to ground set size of 80 using the function \texttt{polytope\_of\_all\_matroids\_rk\_2\_sparse}.

    \begin{table}[]
        \centering
        \begin{tabular}{c||c|c|c|c|c|c|c|c|c|c|c}
             \textbf{n}& 9 & 10 & 11 & 12 & 13 & 14 & 15 & 16 & 17 & 18 & 19\\ \hline \hline 
             dimension & 35 & 44 & 54 & 65 & 77 & 90 & 104 & 119 & 135  & 152 & 170 \\ \hline  \# isomorphism classes & 87 & 128 & 183 & 259& 359 & 493 & 668 & 898 & 1194 & 1578 & 2067 \\ \hline
             \# extremal matroids & 54 & 72 & 100 & 124 & 164 & 204 & 260 & 315 & 398 & 472 & 588  
        \end{tabular}
        
        \vspace{0.5cm}
        
        \begin{tabular}{c||c|c|c|c|c|c|c|c}
             \textbf{n}& 20& 21 & 22 & 23 & 24 & 25 & 26 & 27   \\ \hline \hline  
            dimension & 189 & 209 & 230 & 252 & 275 &299 & 324 & 350  \\ \hline
             \# isomorphism classes & 2693 & 3484 & 4485 & 5739 & 7313 & 9270 & 11705 & 14714 \\ \hline 
            \# extremal matroids & 686 & 831 & 970 & 1180 & 1348 
        \end{tabular}

        \vspace{0.5cm}

        \begin{tabular}{c||c|c | c| c| c}
             \textbf{n}& 28 & 29 & 30 & 31 & 32 \\ \hline \hline  
             dimension &  377 & 405 & 434 & 464 & 495 \\ \hline 
             \# isomorphism classes & 18431 & 22995 & 28598 & 35439 & 43787 
        \end{tabular}
        \
        \caption{We record the output of our computations. We omit the cases $n \leq 8$, where we recover the computations recorded in \cite[Table 1]{FerroniFink2025}. By Corollary \ref{cor extremal}, in rank $2$ the number of vertices is equal to the number of extremal matroids. }
        \label{tab: extremal matroids}
    \end{table}

\end{proof}

\section{Schubert coefficients in rank 3}\label{sec poam rk 3}
We now characterise the lattices of cyclic flats and Schubert expansions of all matroids of rank 3. Unlike matroids of rank 2, the cyclic chain lattices of matroids of rank 3 are not as easily characterisable. To simplify our analysis, we introduce different types of flats and tackle the computation in a case-by-case analysis. Every matroid in this section will be assumed to be of rank 3, over the ground set $[n]$.

\begin{definition}
    Let $M$ be a matroid and let $F\in \Z(M)$ be a cyclic flat. We say that $F$ is \emph{separable} if it can be written as the union of parallel classes. Otherwise, we say that $F$ is \emph{inseparable}. 
\end{definition}
This allows us to decompose the lattice of cyclic flats. In the following, whenever we refer to separable or inseparable flats, we are referring to flats of rank 2. 

In the remainder of this section, we compute the Schubert coefficients for all matroids of rank 3. We start by computing these coefficients for simple matroids in Section \ref{sec rk 3 only 3 cycles}, and then for all matroids by analysing how Schubert coefficients change under simplification in Section \ref{sec rk 3 overlapping}. This then allows us to construct the polytope of all matroids in rank 3 up to ground set size 12 in Section \ref{subsec poam rk 3}.

\subsection{Simple matroids of rank 3} \label{sec rk 3 only 3 cycles}
We begin our analysis by considering matroids that have no parallel classes, i.e., coloopless simple matroids. Note that in such a matroid, every rank 2 cyclic flat $F\in \Z_2(M)$ is inseparable, and further satisfies that  $M | F \cong U_{2, |F|}$.

\begin{remark}\label{rem algo insep}
The lattice of cyclic flats of a coloopless simple matroid $M$ consists precisely of $\emptyset$, all the inseparable flats, and $E(M)$. Thus, it suffices to construct all possible arrangements of inseparable flats. These arrangements satisfy two properties: 
\begin{enumerate}
    \item Every inseparable cyclic flat $F$ is of size at least 3 and at most $n-1$; and 
    \item For two inseparable cyclic flats $F_1$ and $F_2$, we have that  $|F_1 \cap F_2| \leq 1$, since the intersection of two flats is always a flat, and every rank 1 flat of size greater than or equal to $2$ is cyclic. 
\end{enumerate}
    
\end{remark}

\begin{lemma}
    Every arrangement of inseparable flats satisfying properties (1) and (2) from above is a lattice of cyclic flats of a matroid of rank $3$. 
\end{lemma}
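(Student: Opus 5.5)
We will verify the axioms of Theorem \ref{thm properties z(m)}, in the same way as in the proof of Lemma \ref{prop non schubert rk 2}. Let $\mathcal{F}$ be an arrangement of subsets of $[n]$ satisfying properties (1) and (2) of Remark \ref{rem algo insep}. Set
\[
\mathcal{Z} = \{\emptyset\} \cup \mathcal{F} \cup \{[n]\}.
\]
Define $r(\emptyset)=0$, $r(F)=2$ for $F\in\mathcal{F}$, and $r([n])=3$. By Lemma \ref{crank}, the resulting matroid then has rank $3$ and is coloopless.

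\textbf{(Z0).} Order $\mathcal{Z}$ by inclusion, with bottom $\emptyset$ and top $[n]$. Let $F_1\neq F_2\in\mathcal{F}$. Neither contains the other, since $|F_1\cap F_2|\le 1<3\le|F_i|$. The only common upper bound of $F_1$ and $F_2$ is therefore $[n]$, so $F_1\vee F_2=[n]$. Likewise $F_1\wedge F_2=\emptyset$. Hence $\mathcal{Z}$ is a lattice of height $2$.

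\textbf{(Z1).} This holds by definition, since $r(\emptyset)=0$.

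\textbf{(Z2).} There are three kinds of cover or comparability relation to check.
\begin{itemize}
\item For $\emptyset\subset F$, we have $0<2<3\le|F|$.
\item For $F\subset[n]$, we have $0<1<|[n]\setminus F|$, which requires $|F|\le n-2$.
\item For $\emptyset\subset[n]$, we have $3<n$.
\end{itemize}
The second case is the main obstacle. Property (1) only gives $|F|\le n-1$. If $|F|=n-1$, the remaining element $e$ would be a coloop, so $[n]$ would not be cyclic. The clean fix is to replace the top of the lattice by the union of all members of $\mathcal{F}$ together with whatever else forces cyclicity. A simpler route is to interpret the lemma as requiring $|[n]\setminus F|\ge 2$, i.e.\ $|F|\le n-2$. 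I expect that the intended bound in (1) really is $n-2$ once $[n]$ is the top. Otherwise we must allow $\hat{\bm 1}=F$ with a coloop and treat that as a degenerate (Schubert-like) case. I would state this restriction explicitly and handle the $|F|=n-1$ case separately. In that case the lattice is the chain $\emptyset\prec F\prec[n]$ after adjoining the coloop, which is just a Schubert matroid and is already covered. The bound $3<n$ in the third case follows from $n\ge4$.

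\textbf{(Z3).} For comparable $X,Y$ the inequality is trivial. For incomparable $F_1,F_2$ we need
\[
2+2\ \ge\ 3+0+|(F_1\cap F_2)\setminus\emptyset|=3+|F_1\cap F_2|.
\]
This holds exactly because $|F_1\cap F_2|\le1$, which is property (2).

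Finally, Theorem \ref{thm properties z(m)} yields a matroid $M$ with $\mathcal{Z}(M)=\mathcal{Z}$ and rank $3$. The minimal cyclic flat is $\emptyset$, so $M$ is loopless. To see that $M$ is simple, suppose some pair were parallel. Then the closure of that pair would have rank $1$ and size at least $2$, and would therefore be a cyclic flat of rank $1$, which does not appear in $\mathcal{Z}$. So the arrangement is realised by a coloopless simple matroid, as claimed.
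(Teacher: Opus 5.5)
This is the paper's approach (its proof is the one line ``verify the axioms of Theorem~\ref{thm properties z(m)}''), and your verification is correct, including your catch in (Z2). Two cyclic flats $F\subsetneq[n]$ cannot differ by a single element, since (Z2) would require $0<r([n])-r(F)<1$. So with the lattice $\{\emptyset\}\cup\mathcal{F}\cup\{[n]\}$ of Remark~\ref{rem algo insep}, property (1) must read $3\le|F|\le n-2$ (together with $n\ge 4$), a point the paper's proof overlooks.

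One correction to your aside on the degenerate case $|F|=n-1$. There $F$ is necessarily the only flat, since any other $F'$ would satisfy $|F'|\le|F'\cap F|+1\le 2$. The realizing simple matroid is $U_{2,n-1}\oplus U_{1,1}$, whose lattice of cyclic flats is the chain $\emptyset\prec F$, not $\emptyset\prec F\prec[n]$, because the remaining element is a coloop and $[n]$ is not cyclic.
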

\begin{proof}
    Follows immediately by verifying the axioms in Theorem \ref{thm properties z(m)}.
\end{proof}

We use this characterisation to computationally incrementally generate all such arrangements. For simple matroids, we can easily compute their Schubert coefficients with 

\begin{proposition}\label{prop schubert simple}
Let $M$ be a rank 3 matroid on $[n+m]$ with no parallel classes, $m$ coloops, which are on $[n+1;n+m]$, and $t$  inseparable cyclic flats, $t_{i}$ of which are of cardinality $i$. Then
\begin{enumerate}
\item $\lambda_{S} = 1-t$ for $S=S(1, 2, 3)$,
\item $\lambda_{S} = t_{k}$ for $S = S(1, 2, 1+t_{k})$,
\end{enumerate}
and $\lambda_{S} = 0$ for every other Schubert matroid.
\end{proposition}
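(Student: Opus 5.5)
The plan is to copy the proof of Theorem \ref{thm schubert decomp rk 2}: first describe the cyclic chain lattice $\chainZ(M)$ explicitly, then compute the M\"obius values $\mu(\C,\hat{\bm{1}})$, and finally group chains by the isomorphism class of the Schubert matroid they determine.

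\emph{The cyclic chain lattice.} Since $M$ has no parallel classes, it has no loops and no rank-$1$ cyclic flats, so the minimal cyclic flat is $\emptyset$. By Lemma \ref{crank}, the coloops $[n+1,n+m]$ lie in no circuit, so the maximal cyclic flat is $E:=[n]$, the union of all circuits. By Remark \ref{rem algo insep}, the only other cyclic flats are the $t$ inseparable rank-$2$ flats $F_1,\dots,F_t$. These are pairwise incomparable, since they have the same rank and $|F_i\cap F_j|\le 1$. Hence $\Z(M)=\{\emptyset, F_1,\dots,F_t, E\}$. The chains containing $\emptyset$ and $E$ are therefore exactly
\[
\C_0=(\emptyset\prec E) \quad\text{and}\quad \C_i=(\emptyset\prec F_i\prec E) \quad (1\le i\le t).
\]
So $\chainZ(M)$ has bottom $\C_0$, atoms $\C_1,\dots,\C_t$, and top $\hat{\bm{1}}$; structurally it is the same as in rank $2$.

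\emph{M\"obius values.} Each $\C_i$ with $i\ge 1$ is covered by $\hat{\bm{1}}$, so $\mu(\C_i,\hat{\bm{1}})=-1$ and $\lambda_{\C_i}=1$. Exactly as in the proof of Theorem \ref{thm schubert decomp rk 2}, the defining recursion for $\mu$ gives $\mu(\C_0,\hat{\bm{1}})=t-1$, so $\lambda_{\C_0}=1-t$. By Theorem \ref{thm: schubert_moebius} and uniqueness of the expansion, every Schubert class not of the form $S_{\C_i}$ receives coefficient $0$.

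\emph{Identifying the Schubert classes.} The chain $\C_0$ carries ranks $0$ and $r(E)$. Its Schubert matroid is loopless, with $[n]$ uniform and the $m$ elements $[n+1,n+m]$ as coloops; this is the class labelled $S(1,2,3)$. A chain $\C_i$ is determined, up to isomorphism, by $|F_i|$ together with the fixed data $r(F_i)=2$, $|E|=n$ and the $m$ coloops. The corresponding lattice path has its rank-$2$ corner determined by $|F_i|$. Summing the coefficients $\lambda_{\C_i}=1$ over the $t_k$ flats with $|F_i|=k$ then gives $\lambda_S=t_k$ for the Schubert class attached to cardinality $k$.

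I expect the main obstacle to be this last bookkeeping step: translating each chain $\emptyset\prec F\prec E$ (with the coloops appended) into the lattice-path coordinates of Remark \ref{schubertcount}, and checking that the labels match those written in the statement. In particular, the second Schubert label as written should presumably depend on $k$ rather than on $t_k$, and I would verify this index carefully. I would do so by reading off the cyclic flats of the candidate Schubert matroid, as in the proof of Theorem \ref{thm schubert rk 2}, and matching its chain with $\C_i$. I would also separately check the edge cases $t=0$ (where $M$ is uniform plus coloops and $\lambda_{S(1,2,3)}=1$) and $m>0$ (where the rank of $E$ drops). The latter could conflict with $E$ containing rank-$2$ flats $F_i$, and I expect this forces $m=0$ whenever $t>0$.
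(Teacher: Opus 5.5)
Your proposal is correct and is essentially the paper's own argument. Both proofs identify $\Z(M)=\{\emptyset,F_1,\dots,F_t,E\}$, so that $\chainZ(M)$ has bottom $\C_0$, atoms $\C_1,\dots,\C_t$ and top $\hat{\bm{1}}$; this gives $\lambda_{\C_i}=1$ and $\lambda_{\C_0}=1-t$, and the atoms are then grouped by $|F_i|$. You are also right that the second label should read $S(1,2,1+k)$, as the paper's own computation $\lambda_{S(1,2,4)}=t_3=2$ in the $\O_{3,5}$ example confirms.

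One caveat, which the paper's statement and proof share, concerns $m=1$. This forces $t=0$ and $M\cong U_{2,n}\oplus U_{1,1}$. There the bottom chain gives the class $S(1,2,n+1)$, not $S(1,2,3)$, so your identification of $\C_0$ with $S(1,2,3)$ fails. This is harmless in the way the paper uses the proposition, since coloops are handled separately through the $\O_{2,n-1}$ block.
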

\begin{proof}
Since only inseparable flats can be intermediate elements of $\mathcal{Z}(M)$, $\mathcal{Z}(M)$ is structured similarly to rank 2 matroids -- it consists of 3-element chains through the inseparable flats. In the chain lattice $\mathscr{C}_{\mathcal{Z}}(M)$, each 3-element chain $C$ is co-atomic and represents isomorphic matroids up to the size $k$ of the size $l$ inseparable flat contained within it. Hence $\mu(C, \hat{\bm{1}}) = -1$, and there are $t_{k}$ such chains, so $\lambda_{S} = t_{k}$. To calculate $\mu(C_{0}, \hat{\bm{1}})$, corresponding to to $S(1, 2, 3)$, observe that there are $t$ intermediate elements. For each intermediate element, observe that if $\hat{\bm{1}}$ directly covers $F$, then $\mu(F, \hat{\bm{1}}) = -1$. Thus, $\mu(C, \hat{\bm{1}}) = t-1$ and so $\lambda_{S(1,2,3)}=1-t$. These are the only two chain types in the chain lattice, so all other Schubert coefficients are zero. \qedhere
\end{proof}

Note that this proposition still holds if there are no inseparable flats. In this case, the sole non-zero Schubert coefficient is $\lambda_{(1,2,3)} = 1.$

\subsection{The general case}\label{sec rk 3 overlapping}

To calculate the Schubert coefficients of a general matroid, we now describe how the Schubert coefficients change under simplification. For this, it will be helpful to distinguish cyclic flats of rank 1 by whether they are contained in an inseparable flat or not. For the remainder of this section we will assume a matroid $M$ to be on $[n+m]$, where $M$ has $m$ coloops, which are in $[n+1;n+m]$.

\begin{definition}\label{def matroid type k}
Let $M$ be a matroid of rank 3. We say that a rank 1 cyclic flat is \emph{contained} if it is contained in an inseparable cyclic flat. 
\end{definition}

\subsubsection{Simplification and insertion}
It is a classical result that every matroid has a unique \emph{simplification}, i.e., a matroid with the same lattice of flats, but where all loops and all but one element of each parallel class is deleted. We characterised such matroids and their Schubert coefficients in the previous Section \ref{sec rk 3 only 3 cycles}. Now, we consider how the Schubert coefficients of a matroid can be determined from its simplification. To this end, we discuss the process of undoing simplification, called parallel insertion. 

\begin{definition}  
Let $M$ be a matroid on $[n+m]$ and $e\in [n+m]$. We define the \textit{parallel insertion of $M$ at $e$}, denoted $M_{e}$, to be the matroid on $[n+m+1]$ whose flats are those of $M$, except any flat $F \ni e$ now also satisfies $n+m+1 \in F$. 
\end{definition}

\begin{example}\label{ex: insertion 1}
    Consider the loopless matroid $ M$ of rank $3$ on $[6]$ depicted on the left below:
    \begin{center}
    \begin{tabular}{ccc}
       
\Lazypic{4cm}{
\begin{tikzpicture}[scale = 1]
\draw (0.25, -0.75) -- (0.75, -0.25);
\draw (-0.25, -0.75) -- (-0.75, -0.25);
\draw (0.75, 0.35) -- (0.35, 0.75);
\draw (-0.75, 0.35) -- (-0.35, 0.75);
\draw (1.25, -0.25) -- (1.75, -0.75);
\draw (0, -1.25) -- (0,-1.65);
\draw (1.75, -1.25) --(0.25, -1.65);
\draw (0, -2) node{$\emptyset$};
\draw (0, -1) node{$1 \ 5$};
\draw (2, -1) node{$4 \ 6$};
\draw (-1, 0) node{$1 \ 2\ 3 \ 5$};
\draw (1, 0) node{$1\ 4\ 5\ 6$};
\draw(0, 1) node{$[6]$};
\end{tikzpicture}}  &  
\Lazypic{4cm}{
\begin{tikzpicture}[scale = 1]
\draw (0.25, -0.75) -- (0.75, -0.25);
\draw (-0.25, -0.75) -- (-0.75, -0.25);
\draw (0.75, 0.35) -- (0.35, 0.75);
\draw (-0.75, 0.35) -- (-0.35, 0.75);
\draw (1.25, -0.25) -- (1.75, -0.75);
\draw (0, -1.25) -- (0,-1.65);
\draw (1.75, -1.25) --(0.25, -1.65);
\draw (0, -2) node{$\emptyset$};
\draw (0, -1) node{$1 \ 5$};
\draw (2, -1) node{\color{gray} $4$};
\draw (-1, 0) node{$1\ 2\ 3 \ 5$};
\draw (1, 0) node{\color{gray}$1\ 4\ 5$};
\draw(0, 1) node{$[5]$};
\end{tikzpicture}} & 
\Lazypic{4cm}{
\begin{tikzpicture}[scale =1]
\draw (0.25, -0.75) -- (0.75, -0.25);
\draw (-0.25, -0.75) -- (-0.75, -0.25);
\draw (0.75, 0.35) -- (0.35, 0.75);
\draw (-0.75, 0.35) -- (-0.35, 0.75);
\draw (1.25, -0.25) -- (1.75, -0.75);
\draw (0, -1.25) -- (0,-1.65);
\draw (1.75, -1.25) --(0.25, -1.65);
\draw (0, -2) node{$\emptyset$};
\draw (0, -1) node{\color{gray}$1$};
\draw (2, -1) node{\color{gray}$4$};
\draw (-1, 0) node{$1 \ 2 \ 3$};
\draw (1, 0) node{\color{gray}$1\ 4$};
\draw(0, 1) node{\color{gray}$[4]$};
\end{tikzpicture}}\\
$M$ & $M'$ & $M''$
    \end{tabular}
    \end{center}
We observe that this matroid arises from the middle matroid $M'$ by inserting $6$ at $4$, obtaining the parallel class $16$, and considering the resulting lattice structure. Note that this introduces \emph{two} new cyclic flats, and changes a third - all flats of $M'$ involving $1$ turn into cyclic flats of $M$ by taking the union with $6$, whether they were cyclic before or not. 

Similarly, the matroid $M'$ arises from the matroid $M''$ on the right by inserting $5$ into $1$. This creates a parallel class $45$, and turns the ground set from a non-cyclic flat to a cyclic flat. Note that in the above lattices, we depict non-cyclic flats that turn into cyclic flats in gray.
\end{example}

\begin{remark}
Let $M$ be a loopless rank 3 matroid on $[n+m]$, where $k$ is the number of coloops of $M$, and let $e \in [n+m]$. By isomorphism, we may assume that the coloops are the elements $[n+1;n+m]$. If $e$ is a coloop,  the lattice $\mathcal{Z}(M_{e})$ is obtained from $\mathcal{Z}(M)$ by adding a new uncontained rank 1 cyclic flat $\{e, n+m+1\}$, and for each $P \in \mathcal{Z}_{1}(M)$, a new separable cyclic flat $P \cup \{e, n+m+1\}$. Additionally, if $e$ is the only coloop of $M$, the flat $[n+m+1]$ is a cyclic flat. We compute the Schubert coefficients of this parallel insertion in case (1) in Section \ref{sec case 1}. 

If $e$ is not a coloop, there exists a  cyclic flat $F \in \mathcal{Z}(M)$ of minimal rank containing $e$, and one of the following is true:
\begin{enumerate}
\item[(2)] $F$ is a (not necessarily unique) inseparable cyclic flat (see Section \ref{sec case 2});
\item[(3)] $F$ is a unique contained cyclic flat (see Section \ref{sec case 3}); or
\item[(4)] $F$ is a unique uncontained cyclic flat (see Section \ref{sec case 4}).
\end{enumerate}
\label{ecases}
This is because $E(M)$ is a cyclic flat by the position of the coloops. If $e$ is contained in a separable cyclic flat $G$, then it is also contained in a rank 1 cyclic flat. Further, if $F$ is of rank 1, it is unique as there is a unique flat of rank 1 containing a ground set element since $M$ is loopless.

Here, the insertion changes the lattice of cyclic flats as follows:
\begin{itemize}
\item[(2)] In case 2, where $\mathscr{F}$ is the set of inseparable cyclic flats containing $e$, $\mathcal{Z}(M_{e})$ is obtained by introducing a new contained rank 1 cyclic flat $\{e, n+m+1\}$, replacing every flat $F\in \mathscr{F}$ with $F \cup \{n+1\}$, and introducing new separable cyclic flats of the form $P \cup \{e, n+1\}$ for each $P \in \mathcal{Z}_{1}(M)$, as in case (1).
\item[(3)+(4)] In cases 3 and 4, let $F\ni e$ be the unique flat containing $e$. Then $\mathcal{Z}(M_{e})$ is obtained by exchanging all flats $\Tilde{F} \supseteq F$ for $\Tilde{F} \cup \{n+m+1\}$.
\end{itemize}

\end{remark}

\begin{example}\label{ex insertion 2}
    In Example \ref{ex: insertion 1}, the insertion of $5$ into $1$ creates another Schubert matroid and is a case 1 insertion. However, since the original matroid does not have a maximal chain of length $4$ in the lattice of cyclic flats, we will cover this type of insertion in Section \ref{sec exceptional cases}. The insertion of $6$ into $4$ is also a case 1 insertion, covered in Section \ref{sec case 1} as it is not exceptional. 
\end{example}

\begin{lemma}
    Any matroid $M$ of rank $3$ on $[n+m]$ can be constructed from its simplification via parallel insertion. 
\end{lemma}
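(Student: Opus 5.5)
The plan is an induction on the number of elements deleted when passing from $M$ to its simplification $\mathrm{si}(M)$. Write $k = |L(M)| + \sum_{P\in P(M)} (|P|-1)$. If $k=0$, then $M$ is simple and there is nothing to show. If $k>0$, we pick a non-loop element $f$ that is parallel to some other element $e \neq f$. We then show that $M$ is isomorphic to the parallel insertion $(M\setminus f)_e$, relabelling $f$ as $n+m+1$. The deletion $M\setminus f$ has the same simplification as $M$ and one fewer redundant element, so the induction hypothesis applies to it.

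The key step is the identification $M \cong (M\setminus f)_e$. Since $e$ and $f$ are parallel, $\rk_M(X\cup f)=\rk_M(X\cup e)$ for every set $X$. Hence every flat of $M$ contains $f$ if and only if it contains $e$. It follows that the flats of $M$ are exactly the sets $F$ or $F\cup\{f\}$, where $F$ runs over the flats of $M\setminus f$ and $f$ is added precisely when $e\in F$. This is the description of $(M\setminus f)_e$ in the definition of parallel insertion. Because a matroid is determined by its lattice of flats together with the ground set, the two matroids agree.

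The remaining obstacle is loops. The definition of parallel insertion only inserts elements parallel to an existing element, whereas the simplification also discards loops. I would handle this in one of two ways.
\begin{itemize}
\item One option is to read the statement for loopless $M$, which is the setting of the surrounding remark, and note that loops are added afterwards as a direct sum with $U_{0,1}$. This shifts every Schubert index in the expansion, just as $l$ shifts the indices in Theorem \ref{thm schubert decomp rk 2}.
\item The other option is to allow the degenerate insertion at the empty set: a new element is added to every flat, i.e.\ to the closure of $\emptyset$. This makes it a loop, and the same flat-description argument applies with $e$ replaced by $\emptyset$.
\end{itemize}

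Finally, rank is preserved because parallel insertion does not change the rank of any flat, so the construction stays within rank $3$ matroids on the enlarged ground set $[n+m]$.
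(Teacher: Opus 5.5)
Your proof is correct and follows essentially the paper's argument: parallel insertion undoes one simplification step, and one iterates down to the unique simplification. You make this explicit through the flat-by-flat identification $M\cong (M\setminus f)_e$ and the induction on redundant elements. Your caveat about loops is also well taken, since insertion at an element of the loopless simplification never produces a loop; the paper's one-line proof passes over this, because loops are handled separately in the loops-and-coloops step when $\mathcal{O}_{3,n}$ is assembled.
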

\begin{proof}
    This follows immediately as parallel insertion is the reversal of a simplification step, and every matroid has a unique simplification. Note that due to our notation convention, it will be advantageous to perform parallel insertions into coloops at the end of the process. 
\end{proof}

\subsubsection{Schubert coefficients}
We now compute the Schubert coefficients of arbitrary matroids using the extension discussed in the previous subsection. Note that this is a finite process as simplification is finite, and the simplification has lower ground set size. We begin with a preliminary lemma that will aid in the computation of the minimal chain. 

\begin{lemma} \label{lem lattice changes}
Let $\mathcal{L}$ be a ranked, finite lattice whose elements take ranks $\{0, 1, 2, 3\}$. Let $v_{0}$ be the minimal element and $v_{3}$ the maximal element, and $V_{1}, V_{2}$ the sets of elements of ranks 1 and 2 respectively, and let $C$ be the set of 3 element saturated chains from $v_{0}$ to an element of $V_{2}$. Then
\begin{align*}
\mu(v_{0}, v_{3}) = |V_{1}|+|V_{2}|-|C|-1.
\end{align*}
\end{lemma}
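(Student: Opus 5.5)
The plan is to compute $\mu(v_0,v_3)$ directly from the recursive definition of the M\"obius function, working upward one rank at a time. Since $\mathcal{L}$ is ranked with ranks in $\{0,1,2,3\}$, I will use that its elements are exactly $v_0$, $V_1$, $V_2$ and $v_3$. Here $v_3$ is the unique element of rank $3$, because it is the maximum.

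I first reinterpret $C$. In a ranked lattice, a saturated chain from $v_0$ to some $b\in V_2$ has the form $v_0 \lessdot a \lessdot b$ with $a\in V_1$. Conversely, any pair $a<b$ with $a\in V_1$, $b\in V_2$ is a cover relation, since the ranks differ by one, and so gives such a chain. Hence
\[
|C| = \sum_{b\in V_2} \#\{a\in V_1 \suchthat a<b\}.
\]

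Next I compute the M\"obius values in order of rank:
\begin{itemize}
\item $\mu(v_0,v_0)=1$.
\item For $a\in V_1$, the only element of $[v_0,a)$ is $v_0$, so $\mu(v_0,a)=-1$.
\item For $b\in V_2$, the interval $[v_0,b)$ consists of $v_0$ together with the atoms below $b$. Therefore
\[
\mu(v_0,b) = -\bigl(1 - \#\{a\in V_1\suchthat a<b\}\bigr) = \#\{a\in V_1\suchthat a<b\}-1 .
\]
\end{itemize}

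Finally, $[v_0,v_3)$ is all of $\mathcal{L}\setminus\{v_3\}$, so
\[
\mu(v_0,v_3) = -\Bigl(1 - |V_1| + \sum_{b\in V_2}\bigl(\#\{a<b\}-1\bigr)\Bigr) = -1 + |V_1| - |C| + |V_2|,
\]
using the expression for $|C|$ above. This is the claimed formula.

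There is no real obstacle here. The only point needing care is the identification of $C$ with the set of comparable pairs $(a,b)\in V_1\times V_2$, which uses the rankedness hypothesis.
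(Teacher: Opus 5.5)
Your proof is correct and takes essentially the same route as the paper: both apply the recursive definition rank by rank, obtaining $\mu(v_0,a)=-1$ on atoms and $\mu(v_0,b)=|C_b|-1$ on rank-2 elements, and then sum over $[v_0,v_3)$. You also state explicitly two points the paper leaves implicit: that $\mathcal{L}$ consists exactly of $v_0$, $V_1$, $V_2$, $v_3$, and that $|C|$ counts the comparable pairs in $V_1\times V_2$.
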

\begin{proof}
Let $C_{v}$ be the set of elements of $C$ whose maximal element is $v$, so $C = \bigcup_{r(v)=2}C_{v}$. Then
\begin{align*}
\mu(v_{0},v_{3}) &= - \sum_{v_{0} \leq v < v_{3}} \mu(v_{0}, v) \ \ =\ \ - \Big( \mu(v_{0}, v_{0}) + \sum_{r(v)=1} \mu(v_{0}, v) + \sum_{r(v)=2} \mu(v_{0}, v) \Big) \\
&= - \Big(1 + \sum_{r(v)=1} -1 + \sum_{r(v)=2} |C_{v}| - 1\Big) \ \  = \ \ -1 + |V_{1}| - |C| + |V_{2}|. \qedhere
\end{align*}
\end{proof}
The consequence of Lemma \ref{lem lattice changes} is that every chain from the minimal element of $\chainZ(M)$ increases the Schubert expansion coefficient of $S(1,2,3)$, and every non minimal, non maximal element decreases it. In the computations that follow, denote by $\zeta_{C}$ the Schubert expansion coefficient of the chain $C$ (or analogously, the Schubert matroid S) in $M_{e}$ to distinguish from the decomposition coefficients of $M$.
\begin{definition}
Let $S=S(i_{1}, i_{2}, \dots, i_{r}) \in \mathscr{S}_{n, r}$. Then define 
\begin{align*}
S' = S(i_{1}, \dots, i_{r}) \in \mathscr{S}_{n, r}.
\end{align*}
\end{definition}
$S'$ is the Schubert matroid such that $\mathcal{Z}(S')$ is identical to $\mathcal{Z}(S)$, except from the inclusion of $n+1$ in the maximal element of $\mathcal{Z}(S')$. It is immediate that if $S_{1}, S_{2} \in \mathscr{S}_{n, r}$ are not isomorphic, then $S_{1}'$ and $S_{2}'$ are not isomorphic. Moreover, we may denote $S'$ for $S=(i_1, \dots, i_{n-1}, n+1)$ to denote the corresponding Schubert matroid of $\mathscr{S}_{n+1, r}$, even though the initial expression does not describe a Schubert matroid in $\mathscr{S}_{n, r}$

We now begin with the case-by-case analysis. For simplicity of notation, we will assume that the lattice of cyclic flats of the matroid $M$ into which we insert has a chain of maximal length, i.e., of length $4$. We cover the other cases separately in Section \ref{sec exceptional cases}.

\subsubsection{Case 1}\label{sec case 1}
We begin by considering parallel insertions which produce a new, uncontained parallel class. For notational clarity, we denote the Schubert coefficients of the insertion by $\zeta_S$ for respective Schubert matroids $S$.
\begin{proposition}\label{prop schubert case 1}
Let $M$ be a loopless rank 3 matroid and let the insertion of $e \in E(M)$ into $M$ be a case 1 insertion. Denote by $p$ the number of parallel classes and by $p_k$ the number of parallel classes of size $k$. We then have
\begin{enumerate}
\item $\zeta_{S'} = \lambda_{S} + 1 - |\mathcal{P}(M)| - p_{2}$ for $S=S(1,3,4)$.
\item $\zeta_{S'} = \lambda_{S} - p_{k}$ for $S=(1, k+1,k+2)$ and $3 \leq k \leq n-1$.
\item $\zeta_{S'} = \lambda_{S} +1 - |\mathcal{P}(M)| $ for $S=S(1, 2, 3)$.
\item $\zeta_{S'} = \lambda_{S} + p_{k}$ where $S=S(1, 3, k+3)$ and $3\leq k \leq n-1$.
\item $\zeta_{S'} = \lambda_{S} + p_{k}$ where $S=S(1, k+1, k+3)$ and $3<k \leq n-1$.
\item $\zeta_{S'} = \lambda_{S} + 2p_{2}$ where $S=S(1, 3, 5)$.
\item $\zeta_{S'} = \lambda_{S} - p_{k}$, where $S=S(1, 2, k+3)$ and $2 \leq k<n-2$.
\end{enumerate}
and $\zeta_{S'} = \lambda_{S}$ in all other cases. Note that all $p_k$ are taken of the pre-insertion matroid $M$.\label{case1prop}
\end{proposition}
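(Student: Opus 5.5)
We compare the cyclic chain lattices $\chainZ(M)$ and $\chainZ(M_e)$ directly. By Theorem \ref{thm: schubert_moebius}, each Schubert coefficient is a sum of $-\mu(\C,\hat{\bm 1})$ over chains $\C$ of a given isomorphism type. So we first fix how $\mathcal{Z}(M_e)$ arises from $\mathcal{Z}(M)$ in case 1, using the preceding Remark. Write $Q=\{e,n+m+1\}$. The insertion creates one new uncontained rank 1 cyclic flat $Q$. For each rank 1 cyclic flat $P\in\mathcal{Z}_1(M)$ it creates one new separable rank 2 cyclic flat $P\cup Q$. Every old cyclic flat survives, and the only change is that $n+m+1$ joins the maximal element, which is why each old Schubert type $S$ is relabelled $S'$.

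The new chains of $\chainZ(M_e)$, i.e.\ those not coming from old chains via $S\mapsto S'$, are then:
\begin{itemize}
\item[(a)] $\emptyset\prec Q\prec E$;
\item[(b)] $\emptyset\prec P\cup Q\prec E$;
\item[(c)] $\emptyset\prec Q\prec P\cup Q\prec E$;
\item[(d)] $\emptyset\prec P\prec P\cup Q\prec E$.
\end{itemize}
For a parallel class $P$ of size $k$, these types read off as $S(1,3,4)$ for (a), $S(1,2,k+3)$ for (b), $S(1,3,k+3)$ for (c) and $S(1,k+1,k+3)$ for (d). We check these identifications via the lattice-path bijection of Remark \ref{schubertcount}. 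When $k=2$, types (c) and (d) coincide as $S(1,3,5)$, which accounts for the factor $2p_2$ in item (6).

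Next we compute Möbius values in the enlarged chain lattice. The key tool is the standard fact that for a chain $\C$, the interval $[\C,\hat{\bm 1}]$ in the chain lattice is governed by the chains refining $\C$. This is a product or Boolean-type structure, since chains extending $\C$ correspond to choosing elements in each gap of $\C$.
\begin{itemize}
\item Maximal chains (length 4) are covered by $\hat{\bm 1}$ and have $\mu=-1$. This gives the $+p_k$ contributions in (4), (5) and (6).
\item For a 3-element chain $\emptyset\prec X\prec E$, $\mu(\C,\hat{\bm 1})$ equals minus one plus the number of 4-chains refining it (apply Lemma \ref{lem lattice changes}, or the rank 2 computation in the proof of Theorem \ref{thm schubert decomp rk 2}, to the upper interval).
\end{itemize}
Thus every old chain $\emptyset\prec P\prec E$ gains exactly one refinement, namely $\emptyset\prec P\prec P\cup Q\prec E$. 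Its coefficient therefore drops by one per parallel class of that size, which gives the $-p_k$ in (2), where the old type is $S(1,k+1,k+2)$. The new chain (b) has a single refinement (c) together with (d)? Here we must be careful about which 4-chains pass through $P\cup Q$: both (c) and (d) do, so $\mu=+1$ and the contribution is $-p_k$ in (7). The new chain (a) is refined by one (c) for each $P$, giving $\lambda=1-|\mathcal P(M)|$ in (1). Its type coincides with the old $k=2$ type $S(1,3,4)$ from (2), which is why the extra $-p_2$ appears there.

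Finally, the coefficient of the minimal chain $S(1,2,3)$ comes from Lemma \ref{lem lattice changes} applied to the whole lattice, by tracking the increments:
\begin{itemize}
\item $|V_1|$ increases by $1$;
\item $|V_2|$ increases by $|\mathcal P(M)|$;
\item $|C|$ increases by $2|\mathcal P(M)|$.
\end{itemize}
So $\mu$ changes by $1-|\mathcal P(M)|$. With the sign convention this produces item (3), and it must be checked against the stated $+1-|\mathcal P(M)|$.

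We expect the main obstacle to be the bookkeeping of Möbius values of the 3-chains $\emptyset\prec X\prec E$ where $X$ is an old inseparable flat or an old rank 1 flat. The task is to verify that no new refinements arise for inseparable $X$, because $P\cup Q$ is never contained in an inseparable flat when $e$ is a coloop. Matching signs between the Lemma and $\lambda_\C=-\mu$ also needs care, so we would check each item against a small example such as Example \ref{ex: insertion 1}.
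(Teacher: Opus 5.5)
Your route is the paper's. Your families (a)--(d) are exactly its new chains (1), (4), (2), (3), the refinement counts agree, and the extra refinement $\emptyset\prec P\prec P\cup Q\prec E$ of each old chain $\emptyset\prec P\prec E$ is the same observation. So items (1), (2), (4)--(7) come out right. (Your count also gives $+p_3$ for $S(1,4,6)$, so the range in (5) should read $3\le k$.)

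The gap is item (3), and no ``sign convention'' closes it. Your increments are correct: they are those of $\mathcal Z(M_e)$, and in $\chainZ(M_e)$ one gains $1+|\mathcal P(M)|$ three-element and $2|\mathcal P(M)|$ four-element chains, with the same net effect. So $\mu(\C_0,\hat{\bm 1})$ changes by $1-|\mathcal P(M)|$. You used $\lambda_\C=-\mu(\C,\hat{\bm 1})$ to get $+p_k$ in (4)--(6) and $-p_k$ in (7). With that same convention, the coefficient of $S(1,2,3)$ changes by $|\mathcal P(M)|-1$, the negative of what (3) asserts.

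Flipping the convention does not help. Since $\sum_{\C\neq\hat{\bm 1}}\mu(\C,\hat{\bm 1})=-\mu(\hat{\bm 1},\hat{\bm 1})=-1$, the Schubert coefficients of every matroid have the same total, so the changes must sum to zero. The changes listed in (1), (2), (4)--(7) sum to $1-|\mathcal P(M)|$. In fact (3) is false as stated.

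Concretely, take $M=U_{1,2}\oplus U_{1,2}\oplus U_{1,1}$ on $[5]$ and insert at the coloop $5$. The expansion of $M$ is $2S(1,3,5)-S(1,2,5)$, which is the column of $\O_{3,5}$ with entries $2,-1$. So $\lambda_{S(1,2,3)}=0$ and $|\mathcal P(M)|=2$, and (3) predicts $-1$. But $M_e=U_{1,2}^{\oplus 3}$ has a Boolean lattice of cyclic flats. Its Schubert coefficients on $S(1,2,3),S(1,3,4),S(1,2,5),S(1,3,5)$ are $1,-3,-3,6$, as counting bases confirms: $20-3\cdot 16-3\cdot 16+6\cdot 14=8$. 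Items (1), (6), (7) correctly give $-3,6,-3$, but the $S(1,2,3)$-coefficient is $+1$.

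The paper's proof reaches $1-|\mathcal P(M)|$ only by asserting that the new three-element chains raise this coefficient and the new four-element chains lower it. That is the reverse of Lemma \ref{lem lattice changes} and of $\lambda_{S(1,2,3)}=1-t$ in Proposition \ref{prop schubert simple}. The paper's own exceptional rule for $|\mathcal P(M)|=0$ (the coefficient drops by $1$) agrees with $|\mathcal P(M)|-1$. So your argument should conclude $\zeta_{S(1,2,3)'}=\lambda_{S(1,2,3)}+|\mathcal P(M)|-1$ instead of claiming agreement with (3).

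A smaller point, which you share with the paper: since $e$ is a coloop, $E(M)$ is not a cyclic flat of $M$. So ``only the maximal element gains $n+m+1$'' presupposes formally adjoining $E(M)$ as a rank-$3$ top of $\mathcal Z(M)$. This is harmless, because every chain $D$ avoiding the genuine top $T$ of $\mathcal Z(M)$ has $\mu(D,\hat{\bm 1})=0$: toggling $T$ is a sign-reversing involution on the chains containing $D$. It should still be said.
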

\begin{proof}
$M_{e}$ adds the new uncontained rank 1 cyclic flat $P_{e} = \{e, n+1\}$. Hence $\chainZ(M_{e})$ contains the new chains: 
\begin{enumerate}
\item $\emptyset < P_{e} < E(M_{e})$ of type $S(1, 3, 4)$ and Schubert coefficient $1-|\mathcal{P}(M)|$; there are $|\mathcal{P}(M)|$ chains from this chain element to $\hat{\bm{1}}$. This chain adds 1 to the coefficient of $S(1,2,3)$.
\item $\emptyset < P_{e} < P \cup P_{e} < E(M_{e})$ of type $S(1, 3, k+3)$ and a Schubert coefficient of 1 for any $P \in \mathcal{P}(M)$ with $|P|=k$; there are $p_{k}$ of these chains. It covers the two atomic chains $\emptyset < P_{e} < E(M_e)$ and $\emptyset < P \cup P_{e} < E(M_e)$, so each of the $|P(M)|$ chains of this type decrease the coefficient of $S(1,2,3)$ by 1.
\item $\emptyset < P < P \cup P_{e} < E(M_{e})$ of type $S(1, k+1, k+3)$ and Schubert coefficient $1$ for any $P \in \mathcal{P}(M)$ with $|P|=k$. Similarly to above, each of these $|\mathcal{P}(M)|$ chains decrease the Schubert coefficient of $S(1,2,3)$ by 1.
\item $\emptyset < P \cup P_{e} < E(M_{e})$ of type $S(1,2,k+3)$ for a parallel class $P \in \mathcal{P}(M)$ with $|P|=k$; there are $p_{k}$ of these, and since they are covered by two chains; $\emptyset < P < P \cup P_{e} < E(M_{e})$ and $\emptyset < P_{e} < P \cup P_{e} < E(M_{e})$, their Schubert coefficient is $-1$. Each of these $|\mathcal{P}(M)|$ chains add 1 to the coefficient of $S(1,2,3)$.
\end{enumerate}
Moreover, for $|P|=k$, any chain of the form $\emptyset < P < E(M_e)$ is now covered by the chain $\emptyset<P < P \cup P_{e} < E(M_e)$, decreasing the Schubert coefficient of $S(1,k+1,k+2)$ by $1$ per such flat. No other chains are altered, so these are the only changes to $\chainZ(M)$ and hence the Schubert coefficients.
\end{proof}

\begin{example}
    We compute the Schubert coefficients of the matroid $M$ over $[6]$ given in Example \ref{ex: insertion 1}. In Example \ref{ex insertion 2} we checked that both insertions that generate $M$ are case 1 insertions. Since $M'$ is isomorphic to the Schubert matroid $S(1,3,4)$, we can read off the sole Schubert coefficient: $\lambda_{S(1,3,4)} = 1$. Similarly, read off that $|P(M')| = p_2 =1$, and that all other $p_k=0$. 
    
    From this, we can use Proposition \ref{prop schubert case 1} to determine the coefficients of $M$: \begin{itemize}
        \item[(1)] We have $\zeta_{S} = 1+1-1-1 = 0$ for $S = S(1,3,4)$;
        \item[(3)] We have $\zeta_{S}= 0 +1 -1 = 0 $ for $S = S(1,2,3)$; 
        \item[(6)] We have $\zeta_{S} = 0+2 = 2$ for $S = S(1,3,5)$; and
        \item[(7)] We have $\zeta_S = 0-1 = -1$ for $S = S(1,2,5)$.
    \end{itemize}
    Note that all other possible computations yield $0$, since $\lambda_S = 0$ for all $S \neq S(1,3,4)$ and since $p_k = 0$ for all $k>2$. 
\end{example} 
\subsubsection{Case 2}\label{sec case 2} Next, we consider the case where parallel insertion produces a new, contained parallel class. Since the cases (2)-(4) do not change the coloops of $M$, for simplicity of notation we will assume that $M$ has no coloops, and consider matroids with such coloops when constructing the polytope of all matroids in Section \ref{subsec poam rk 3}. Recall that we denote the Schubert coefficients of the insertion by $\zeta_S$ for respective Schubert matroids $S$. 

Cases 2 and 3 rely on additional variables. If $M$ is a rank 3 matroid, let $p^{c}_{k}$ and $p^{u}_{k}$ be the number of contained and uncontained rank 1 cyclic flats respectively, so that $p_{k} = p^{c}_{k} + p^{u}_{k}$. 

\begin{proposition}
Let $M$ be a loopless matroid of rank 3 with no coloops and $e \in E(M)$ such that $e$ is contained in no rank 1 cyclic flats, and $f$ inseparable rank 2 cyclic flats. Let $p^{e}_{k}$ be the number of rank 1 cyclic flats of $M$ that aren't contained in some inseparable rank 2 cyclic flat that $e$ is contained in. Then
\begin{enumerate}
\item $\zeta_{S'} = \lambda_{S} + 1 - f - p^{e}$ for $S=S(1,2,3)$.
\item $\zeta_{S'} = \lambda_{S} + 1 - f - p^{e} - p^{e}_{2}$ for $S=S(1, 3, 4)$.
\item $\zeta_{S'} = \lambda_{S} - p^{e}_{k}$ for $S=S(1,k+1,k+2)$ and $3 \leq k \leq n-1$.
\end{enumerate}
Moreover, set
\begin{enumerate}
\item $\zeta_{S'} = \lambda_{S} - p^{e}_{k}$ for $S=S(1, 2, k+3)$.
\item $\zeta_{S'} = \lambda_{S} + p^{e}_{k}$ for $S=S(1, 3, k+3)$ and $k > 2$.
\item $\zeta_{S'} = \lambda_{S} + p^{e}_{k}$ for $S=S(1,k+1, k+3)$ and $k>2$.
\item $\zeta_{S'} = \lambda_{S} + 2p^{e}_{2}$ for $S=S(1,3,5)$,
\end{enumerate}
and $\zeta_{S'} = \lambda_{S}$ for all other Schubert matroids. Now, run through each inseparable rank 2 cyclic flat of $M$ that contains $e$. Say this flat contains $p^{f}$ contained rank 1 cyclic flats, $p^{f}_{k}$ of size $k$. Then
\begin{enumerate}
\item $\zeta_{S'} \mapsto \zeta_{S'} + 1$ for $S=S(1, 2, |F|+2)$.
\item $\zeta_{S'} \mapsto \zeta_{S'} + p^{f}-1$ for $S=S(1, 2, |F|+1)$.
\item $\zeta_{S'} \mapsto \zeta_{S'} - p^{f}$ for $S=S(1, 2, |F|+2)$.
\item $\zeta_{S'} \mapsto \zeta_{S'} - p^{f}_{k}$ for $S=S(1, k+1, |F|+1)$.
\item $\zeta_{S'} \mapsto \zeta_{S'} + p^{f}_k$ for $S(1, k+1, |F|+2)$.
\end{enumerate}

\end{proposition}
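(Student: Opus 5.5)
I will follow the structure of the proof of Proposition \ref{prop schubert case 1}: describe exactly how $\mathcal{Z}(M_e)$ differs from $\mathcal{Z}(M)$, then how $\chainZ(M_e)$ differs from $\chainZ(M)$, and finally recompute the Möbius values $\mu(\C,\hat{\bm{1}})$ only for chains whose upper interval has changed. By Remark \ref{ecases}, case (2) changes $\mathcal{Z}$ in three ways:
\begin{enumerate}
\item a new contained rank 1 cyclic flat $P_e=\{e,n+1\}$ appears;
\item every inseparable flat $F\ni e$ becomes $F\cup\{n+1\}$, so its size increases by one and it now contains $P_e$;
\item for every rank 1 cyclic flat $P$ not lying in a common inseparable flat with $e$, a new separable rank 2 flat $P\cup P_e$ appears.
\end{enumerate}
The rank 1 flats $P$ that do share an inseparable flat $F$ with $e$ produce nothing new, since $P\vee P_e=F\cup\{n+1\}$. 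This explains why only $p^e$ and $p^e_k$ enter the first two groups of formulas.

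\emph{First block: the new atom and the minimal chain.} The chain $\emptyset<P_e<E$ has type $S(1,3,4)$. Its cover in $\chainZ(M_e)$ consists of the $f$ chains through the enlarged inseparable flats and the $p^e$ chains through the new separable flats. Hence its coefficient is $1-f-p^e$. Adding the contribution of the chains through the new separable flats of size $2+2$ gives formula (2).

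For the coefficient of $S(1,2,3)$ I use Lemma \ref{lem lattice changes}. The new rank 1 element contributes $+1$, the $p^e$ new rank 2 elements contribute $+p^e$, and the new saturated chains $\emptyset<P_e<F'$ and $\emptyset<P_e<P\cup P_e$, together with $\emptyset<P<P\cup P_e$, contribute $-(f+2p^e)$. Taking $\lambda=-\mu$ then yields (1).

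\emph{Second block: the new separable flats.} Each new separable flat $P\cup P_e$ with $|P|=k$ contributes exactly as in Proposition \ref{prop schubert case 1}:
\begin{itemize}
\item the chains $\emptyset<P<P\cup P_e<E$ and $\emptyset<P_e<P\cup P_e<E$ are coatoms, with types $S(1,k+1,k+3)$ and $S(1,3,k+3)$, and the two types coincide when $k=2$, giving $2p^e_2$;
\item the chain $\emptyset<P\cup P_e<E$ is covered by these two coatoms, so it has coefficient $-1$ and type $S(1,2,k+3)$;
\item the old chain $\emptyset<P<E$ acquires one extra cover, so the coefficient of $S(1,k+1,k+2)$ drops by $p^e_k$, which is (3).
\end{itemize}

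\emph{Third block: the enlarged inseparable flats.} For each $F\ni e$, I remove the old contributions of chains through $F$ and add those through $F\cup\{n+1\}$. Let $\C_F=\emptyset<F<E$. In $M$ its upper interval consists of the $p^f$ coatoms $\emptyset<P<F<E$, so $\mu=p^f-1$; in $M_e$ it consists of $p^f+1$ coatoms, including $\emptyset<P_e<F'<E$.

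So the old type $S(1,2,|F|+1)$ loses coefficient $1-p^f$, and the new type $S(1,2,|F|+2)$ gains $1-(p^f+1)=-p^f$. The chains $\emptyset<P<F'<E$ move from $S(1,k+1,|F|+1)$ to $S(1,k+1,|F|+2)$, which accounts for the $\mp p^f_k$ terms. The new coatom $\emptyset<P_e<F'<E$ of type $S(1,3,|F|+2)$ is added as well.

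Finally I collect all terms and compare with the stated update rules. Every chain of $\chainZ(M)$ not listed above has an unchanged upper interval (up to relabeling $F\mapsto F'$), hence an unchanged coefficient; this gives the "all other" clause.

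\emph{Main obstacle.} The hard part is the bookkeeping in the third block. The stated rules (1) and (3) both concern $S(1,2,|F|+2)$ and must combine into a net $1-p^f$. Rule (2), $+(p^f-1)$ at $S(1,2,|F|+1)$, must be read as cancelling the old value $1-p^f$. I also expect the new $S(1,3,|F|+2)$ coatom to be absorbed in the first block, or else to require a correction. I would verify this against a small example, such as $U_{2,3}\oplus$ a point with $e$ on the line, before trusting the signs.
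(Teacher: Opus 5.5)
Your route is the paper's route: you list the chains of $\chainZ(M_e)$ that are new or whose upper interval changed, then read off $-\mu(\cdot,\hat{\bm{1}})$. Your description of $\mathcal{Z}(M_e)$ and your local M\"obius computations are correct. The gap is the final step, ``collect all terms and compare with the stated update rules''. It cannot succeed, because two of the printed formulas are wrong, and your own bookkeeping already shows this.

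\emph{The coefficient of $S(1,2,3)$.} By Lemma \ref{lem lattice changes}, $\mu(\mathcal{C}_0,\hat{\bm{1}})$ changes by $1+p^e-(f+2p^e)=1-f-p^e$, as you found. With $\lambda=-\mu$ this gives $\zeta_{S(1,2,3)'}=\lambda_{S(1,2,3)}+f+p^e-1$, which is the \emph{negative} of the change claimed in (1). So your sentence ``taking $\lambda=-\mu$ then yields (1)'' is a sign slip. The paper's proof makes the same slip: its tally ($+1$ for the new atom, $-1$ per new covering pair, and so on) consists of $\mu$-increments, contrary to the remark after Lemma \ref{lem lattice changes}.

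A quick consistency check exposes the error. Schubert coefficients always sum to $1$, since $\sum_{\mathcal{C}\le\hat{\bm{1}}}\mu(\mathcal{C},\hat{\bm{1}})=0$, so the total change must vanish. The printed rules instead have total change $2-2f-2p^e$. For a concrete instance, let $e,x,w$ be collinear and let $Q=\{q_1,q_2\},x,z$ be collinear, otherwise generic. Then $f=p^e=1$ and $\lambda_{S(1,2,3)}=1-1-2+1=-1$. The lattice $\mathcal{Z}(M_e)$ has $2$ atoms, $3$ rank-$2$ flats and $4$ covering pairs, so $\zeta_{S(1,2,3)}=0$, whereas (1) predicts $-2$.

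\emph{The third block.} You correctly find that $\emptyset<F'<E(M_e)$ has coefficient $-p^f$. You also find a new coatom $\emptyset<P_e<F'<E(M_e)$ of type $S(1,3,|F|+2)$ with coefficient $1$. That coatom is not absorbed in the first two blocks, since those only involve the $p^e_k$. Hence printed rules (1) and (3) are wrong: they put a net $1-p^f$ on $S(1,2,|F|+2)$ and nothing on $S(1,3,|F|+2)$. Rule (1) must refer to $S(1,3,|F|+2)$, which is exactly the chain the paper's proof lists second.

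For example, put $x$, $P=\{p_1,p_2\}$ and $e$ on a line and add two further generic points. Then $M\cong S(1,3,5)$, and $M_e$ has $\zeta_{S(1,3,6)}=2$ and $\zeta_{S(1,2,6)}=-1$. The printed rules give $1$ and $0$. Your proposed test case $U_{2,3}\oplus U_{1,1}$ has a coloop, so it lies outside the hypotheses; it also has $f+p^e=1$, where the sign error is invisible.

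Rather than hoping the discrepancies cancel, conclude that the statement needs these two corrections. With them, your argument (which is the paper's) is a complete proof.
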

\begin{proof}
We add the new chains:
\begin{enumerate}
\item $\emptyset < \{e, n+1\} < E(M_{e})$ of the form $S(1,3,4)$. This is covered by $p^{e}+f$ other chains (where we add a rank 2 cyclic flat into the chain which is either $P \cup \{e, n+1\}$ for a rank 1 cyclic flat which isn't contained in an inseparable rank 2 cyclic flat which contains $e$, or $F \cup \{n+1\}$ for some inseparable rank 2 cyclic flat containing $e$), so its Schubert coefficient is $1-f-p^{e}$, and it contributes 1 to the Schubert coefficient of $S(1,2,3)$.
\item $\emptyset <\{e, n+1\} < F \cup \{n+1\} < E(M_{e})$ of the form $S(1, 3, |F|+2)$ for an inseparable rank 2 cyclic flat $F$ containing $e$, which has Schubert coefficient 1. There are $f$ of these, and they are ordered above the chains $\emptyset < F \cup \{n+1\} < E(M_{e})$. They cover two chains (obtained by removing one of the intermediate chain elements) so they contribute $-1$ to the Schubert coefficient of $S(1,2,3)$, and there are $f$ of them.
\item $\emptyset < P \cup \{e, n+1\} < E(M_{0})$ of the form $S(1, 2, k+3)$ for any rank 1 cyclic flat $P$ of $M$ with $|P|=k$, which isn't contained in a rank 2 inseparable cyclic flat containing $e$; there are $p^{e}$ of these total, and $p^{e}_{k}$ of this specific form. They are covered by two chains (each obtained by adding $P$ or $\{e, n+1\}$ into the chain) so their Schubert coefficient is $-1$, and they add $1$ to the Schubert coefficient of $S(1, 2, 3)$.
\item $\emptyset < \{e, n+1 \} < P \cup \{e, n+1\} < E(M_{e})$ of the form $S(1, 3, k+3)$ for any rank 1 cyclic flat $P$ that isn't contained in some inseparable rank 2 cyclic flat that $e$ is contained in; there are $p^{e}$ of these, $p^{e}_{k}$ of each type, eith Schubert coefficient $1$. They contain two chains, so they contribute $-1$ to the Schubert coefficient of $S(1, 2, 3)$.
\item $\emptyset < P < P \cup \{e, n+1\} < E(M_{e})$ for $P$ as above. There are $p^{e}$ of these, $p^{e}_{k}$ of the type $S(1, k+1, k+3)$. Similar to above, their Schubert coefficient is $1$ and they contribute $-1$ to the Schubert coefficient of $S(1, 2, 3)$. It is also important to note that this chain covers the chain $\emptyset < P < E(M_{e})$, hence decreasing its Schubert coefficient by 1 (and hence decreasing the Schubert coefficient of $S(1,k+1,k+2)$ by $p_{k}$).
\end{enumerate}
We then need to run through the flats containing $e$; since we add $\{n+1\}$ to these flats, the chains are changed in the following way (assuming notation in the statement of the proposition):
\begin{enumerate}
\item $\big( \emptyset < F < E(M) \big) \to \big( \emptyset < F \cup \{n+1\} < E(M_{e}) \big)$, mapping a chain of type $S(1, 2, |F|+1)$ to one of the type $S(1, 2, |F|+2)$; this chain initially had Schubert coefficient $1-p^{f}$ since it was covered by $p^{f}$ chains (obtained by adding in a contained rank 1 cyclic flat below $F$ which is contained in $F$); we need to add $p^{f}-1$ to $\zeta_{S'}$ for $S=S(1,2, |F|+1)$. The addition of the rank 1 cyclic flat $\{e, n+1\}$ decreases the Schubert coefficient of this chain by 1 relative to its original coefficient as it is covered by a new chain. Hence we subtract $p^{f}$ from the Schubert coefficient $\zeta_{S'}$ for $S=S(1, 2, |F|+2)$.
\item $\big( \emptyset < P < F < E(M) \big) \to \big( \emptyset < P < F \cup \{n+1\} < E(M_{e}) \big)$, mapping a chain of type $S(1,k+1, |F|+1)$ to one of type $S(1, k+1, |F|+2)$ for some rank 1 cyclic flat $P \subset F$ with $|P|=k$. Its Schubert coefficient is 1 and there are $p^{f}_{k}$ of these so we subtract $p^{f}_{k}$ from $\zeta_{S'}$ for $S=S(1, k+1, |F|+1)$ and add $p^{f}_{k}$ to $\zeta_{S'}$ for $S=S(1, k+1, |F|+2)$.
\end{enumerate}
\end{proof}

\subsubsection{Case 3} \label{sec case 3} Next, we consider the case where parallel insertion does not alter the shape of the lattice, but instead inserts an element into an existing contained parallel class. Recall that since the cases (2)-(4) do not change the coloops of $M$, we assume that $M$ has no coloops for ease of notation, and that we denote the Schubert coefficients of the insertion by $\zeta_S$ for respective Schubert matroids $S$. 

Recall that we write $p^{c}_{k}$ and $p^{u}_{k}$ for the number of contained and uncontained rank 1 cyclic flats respectively, so that $p_{k} = p^{c}_{k} + p^{u}_{k}$. 

\begin{proposition}
Let $M$ be a loopless matroid of rank 3 with no coloops and $e \in P_{e}$ for some contained rank 1 cyclic flat $P_{e}$ of size $m$, which is contained in $f$ inseparable cyclic flats. We have
\begin{enumerate}
\item $\zeta_{S'} = \lambda_{S} + p^{u} + f - 1$ for $S=S(1,m+1, m+2)$; and
\item $\zeta_{S'} = \lambda_{S} + 1 - p^{u} - f$ for $S=S(1, m+2, m+3)$.
\end{enumerate}
 Write $\C\F_e = \{F \in \C\F(M) | e\in F\}$, and set $p_k^e = |\{P \nsubseteq F \suchthat P\in \Z_1(M) \text{ and } F\in \C\F_e\}|$. That is, $p^{e}_{k}$ is the number of parallel classes that are not contained in an inseparable cyclic flat that $e$ is contained in. We then have
\begin{enumerate}
\item $\zeta_{S'} = \lambda_{S} + p^{e}_{k-1} - p^{e}_{k}$ for $S=S(1, k+1, m+k+1)$ for $k \neq m, m+1$.
\item $\zeta_{S'} = \lambda_{S} - p^{e}_{k}$ for $S=S(1, m+1, m+k+1)$ for $k \neq m, 0$.
\item $\zeta_{S'} = \lambda_{S} + p^{e}_{k}$ for $S=S(1, m+2, m+k+1)$ for $k \neq m, 0$.
\item $\zeta_{S'} = \lambda_{S} - p^{e}_{m+1} + 2p^{e}_{m} - 2$ for $S=S(1, m+2, 2m+2)$
\item $\zeta_{S'} = \lambda_{S} - 2p^e_{m} + p^e_{m-1} + 2$ for $S=S(1, m+1, 2m+1)$.
\item $\zeta_{S'} = \lambda_{S} +p^{e}_{k} - p^{e}_{k-1}$ for $S=S(1, 2, m+k+1)$.
\item $\zeta_{S'} = \lambda_{S} + p^{e}_{m} - 1 - p^{e}_{m-1}$ for $S=S(1, 2, 2m+1)$.
\item $\zeta_{S'} = \lambda_{S} + p^{e}_{m+1} - p^{e}_{m}+1$ for $S=S(1, 2, 2m+2)$.
\end{enumerate}
and $\zeta_{S'} = \lambda_{S}$ for all other Schubert matroids. Run through every element of $\{F \in i\mathcal{F}(M) \suchthat P_{e} \subset F \}$ in the following way; for each $F$, say $F$ contains $p^{f}$ rank 1 cyclic flats (including $P_{e}$); $p^{f}_{k}$ of size $k$. Then
\begin{enumerate}
\item $\zeta_{S'} \mapsto \zeta_{S'} + p^{f} - 1$ for $S=S(1, 2, |F|+1)$.
\item $\zeta_{S'} \mapsto \zeta_{S'} + 1 - p^{f}$ for $S=S(1, 2, |F|+2)$.
\item $\zeta_{S} \mapsto \zeta_{S'} -1$ for $S=S(1, m+1, |F|+1)$.
\item $\zeta_{S} \mapsto \zeta_{S'} + 1$ for $S=S(1, m+2, |F|+2)$.\
\item $\zeta_{S} \mapsto \zeta_{S'} -p^{f}_{k}$ for $S=S(1, k+1, |F|+1)$.
\item $\zeta_{S} \mapsto \zeta_{S'} + p^{f}_{k}$ for $S=S(1, k+1, |F|+2)$, $k \neq m$.
\item $\zeta_{S} \mapsto \zeta_{S'} + p^{f}_{m} - 1$ for $S=S(1, m+1, |F|+2)$.
\end{enumerate}
The result after running through all such flats is the Schubert expansion of $M_{e}$.

\end{proposition}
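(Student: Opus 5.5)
We follow the strategy of the Case 1 and Case 2 proofs: compare the cyclic chain lattices $\chainZ(M)$ and $\chainZ(M_e)$ chain by chain, and record how the Schubert coefficient $-\mu(\C,\hat{\bm 1})$ and the Schubert type of each chain change. In Case 3 the shape of $\Z(M)$ does not change. By the description in Remark \ref{ecases}, $\Z(M_e)$ is obtained from $\Z(M)$ by replacing $P_e$ with $P_e\cup\{n+1\}$, and every cyclic flat $\tilde F\supseteq P_e$ with $\tilde F\cup\{n+1\}$. Hence $\chainZ(M_e)\cong\chainZ(M)$ as lattices, so every M\"obius value is preserved. The only effect is that some chains change Schubert type, because the sizes of the flats they pass through grow by one. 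The proof is therefore a bookkeeping argument. For each chain $\C$ through an enlarged flat, we subtract $-\mu(\C,\hat{\bm 1})$ from the coefficient of its old type and add it to the coefficient of its new type.

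First we classify the chains containing an enlarged flat. In a coloopless loopless rank 3 matroid, the flats containing $P_e$ are $P_e$ itself, the separable rank 2 flats $P_e\cup P$ for parallel classes $P$ not lying with $P_e$ in a common inseparable flat, the $f$ inseparable flats $F\supseteq P_e$, and $E$. The ground set $E$ also grows, but this is uniform over all chains and is absorbed into the $S\mapsto S'$ relabelling. We then compute $\mu(\C,\hat{\bm 1})$ for each relevant chain, using the coatom argument of Proposition \ref{prop schubert simple} and Lemma \ref{lem lattice changes}. The coefficients are as follows.
\begin{itemize}
\item Maximal chains have coefficient $1$.
\item The chain $\emptyset<P_e<E$ is covered by $p^u+f$ maximal chains, where $p^u$ counts the separable flats $P_e\cup P$. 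Its coefficient is $1-p^u-f$, and its type moves from $S(1,m+1,m+2)$ to $S(1,m+2,m+3)$. This gives the first two items.
\item The chain $\emptyset<P_e\cup P<E$ is covered by two chains and has coefficient $-1$. For $|P|=k$, its type moves from $S(1,2,m+k+1)$ to $S(1,2,m+k+2)$.
\item The maximal chains $\emptyset<P_e<P_e\cup P<E$ move from $S(1,m+1,m+k+1)$ to $S(1,m+2,m+k+2)$.
\item The maximal chains $\emptyset<P<P_e\cup P<E$ move from $S(1,k+1,m+k+1)$ to $S(1,k+1,m+k+2)$.
\end{itemize}
Summing these contributions over $k$ gives the second list, which is phrased in terms of the $p^e_k$. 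The $k=m$, $k=m\pm1$ and $k=0$ exceptions arise where these families land on the same Schubert type. Examples are $S(1,m+1,2m+1)$, which receives contributions from both $P_e\cup P$ and $P\cup P_e$ when $|P|=m$, and the shift $k\mapsto k+1$ that mixes $p^e_{k-1}$ and $p^e_k$. The constants $\pm1,\pm2$ should come from separating off the chains through $P_e$ itself, and we would check them carefully against these coincidences.

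Next we treat each inseparable $F\supseteq P_e$ containing $p^f$ rank 1 cyclic flats. The chain $\emptyset<F<E$ has coefficient $1-p^f$ and moves from $S(1,2,|F|+1)$ to $S(1,2,|F|+2)$, which gives items (1) and (2). The maximal chain $\emptyset<P_e<F<E$ moves from $S(1,m+1,|F|+1)$ to $S(1,m+2,|F|+2)$, which gives items (3) and (4). The maximal chains $\emptyset<P<F<E$ with $P\neq P_e$ move from $S(1,k+1,|F|+1)$ to $S(1,k+1,|F|+2)$. Since $p^f_k$ counts $P_e$ itself when $k=m$, items (5)--(7) follow, with the $-1$ correction in item (7). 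No other chain meets an enlarged flat, so every other coefficient stays $\lambda_S$.

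The main obstacle is the careful handling of these coincidences of types, which produce the exceptional values of $k$ and the constants in items (4), (5), (7) and (8) of the second list. We would verify these identities by enumerating, for each Schubert type, exactly which of the five chain families above land on it before and after the insertion. We would then test the resulting formulas on small examples, such as $M(K_4)$ with one inserted element.
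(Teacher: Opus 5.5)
Your framework is the same as the paper's, and it is sound. Since $|P_e|\ge 2$, the cyclic flats of $M_e$ are those of $M$ with $n+1$ adjoined to each one containing $P_e$. Hence $\chainZ(M_e)\cong\chainZ(M)$, every M\"obius value survives, and one only re-types chains. Your treatment of $\emptyset<P_e<E$ and of the chains through an inseparable $F\supseteq P_e$ matches the paper's proof, up to two small points.

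\textbf{Small points.} First, besides the $f$ chains through inseparable flats, the maximal chains above $\emptyset<P_e<E$ correspond to the separable flats $P_e\cup P$. Their number is what the paper's proof calls $p^e$. You silently call it $p^u$, but $p^u$ was defined as the number of uncontained classes. That is strictly smaller when some class lies on an inseparable flat avoiding $P_e$ yet spans a separable line with $P_e$, so say which quantity you mean. Second, the $k=m$ correction is needed in item (5) of the last list as well as in item (7). The size-$m$ classes $P\neq P_e$ in $F$ remove only $p^f_m-1$ from $S(1,m+1,|F|+1)$; otherwise, together with item (3), you subtract one too many.

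\textbf{The gap.} The genuine gap is the sentence ``summing these contributions over $k$ gives the second list''. This is exactly the step you postpone; the paper's proof also skips it, saying only ``changes similar to case 1''. Carrying out your own bookkeeping does \emph{not} produce the printed list. A separable flat $P_e\cup P$ with $|P|=k$ contributes:
\begin{itemize}
\item $+1$ at $S(1,2,m+k+1)$ and $-1$ at $S(1,2,m+k+2)$;
\item $-1$ at $S(1,m+1,m+k+1)$ and $+1$ at $S(1,m+2,m+k+2)$;
\item $-1$ at $S(1,k+1,m+k+1)$ and $+1$ at $S(1,k+1,m+k+2)$.
\end{itemize}
Summing, the separable flats change:
\begin{itemize}
\item $S(1,2,j)$ by $p^e_{j-m-1}-p^e_{j-m-2}$ for every $j$, with no extra $\mp1$ at $j=2m+1,2m+2$;
\item $S(1,m+1,2m+1)$ by $-2p^e_m$;
\item both $S(1,m+2,2m+2)$ and $S(1,m+1,2m+2)$ by $p^e_m-p^e_{m+1}$.
\end{itemize}
The third family shifts only the last index, so no term $p^e_{k-1}-p^e_k$ sits on the single type $S(1,k+1,m+k+1)$ as in item (1). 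The second family lands on $S(1,m+2,m+k+2)$, not on $S(1,m+2,m+k+1)$ as in item (3).

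No chain produces the constants $\pm1,\pm2$ or the term $p^e_{m-1}$ in items (4), (5), (7), (8). Since $p^e_m$ does not count $P_e$, there is no ``$P_e\cup P_e$'' to separate off. Those constants are the Case 4 corrections, where $p_m$ does count $P_e$. So your suggestion that they ``come from separating off the chains through $P_e$ itself'' cannot be made to work.

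\textbf{A minimal example.} Take a line $\{a,a',b,c\}$ with $a\parallel a'$, plus two generic points. This gives $M\cong S(1,3,5)$ on six elements, with $m=2$, $f=1$, all $p^e_k=p^u=0$ and $p^f=p^f_2=1$. Inserting at $a$ gives $M_e\cong S(1,4,6)$, whose only nonzero coefficient is $\zeta_{S(1,4,6)}=1$.

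Your bookkeeping reproduces this: only $\emptyset<P_e<F<E$ has nonzero coefficient, and it moves from $S(1,3,5)$ to $S(1,4,6)$. The printed formulas instead give $\zeta_{S(1,2,5)}=-1$ and $\zeta_{S(1,2,6)}=1$ from items (7) and (8), which the per-flat step leaves alone because $p^f-1=0$. They also give $\zeta_{S(1,4,6)}=-2+1=-1$.

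So your argument proves the corrected list above, not the second list as printed. You should state and prove that corrected list rather than defer to the printed one.
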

\begin{proof}
The first change is
\begin{align*}
\big( \emptyset < P_{e} < E(M)\big) \to \big( \emptyset < P_{e} \cup \{n+1\} < E(M_{e})).
\end{align*}
This changes a chain of type $S(1, m+1, m+2)$ to one of the type $S(1, m+2, m+3)$; the original chain (hence both) have Schubert coefficient $1-f-p^{e}$ since it is covered by $p^{e}+f$ chains, obtained by either adding $P \cup P_{e}$ or $F$ above $P_{e}$ in the chain, where $P$ is a parallel class which isn't contained in an inseparable cyclic flat $P_{e}$ is contained in, and $F$ is an inseparable cyclic flat containing $P_{e}$. Hence we add $1-f-p^{e}$ to $\zeta_{S'}$ for $S=S(1, m+2, m+3)$ and subtract it from $\zeta_{S'}$ for $S=S(1, m+1, m+2)$. Since no new elements or chains are added in $\mathcal{Z}(M)$, the Schubert coefficient for $S(1,2,3)$ is unchanged. Over the parallel classes $P \subset M$ which are not contained in a rank 1 cyclic flat $P_{e}$ is contained in, we have the changes similar to case 1. Now over each inseparable cyclic flat $F$ containing $P_{e}$ with notation as in the statement of the proposition, we have the changes
\begin{enumerate}
\item $\big( \emptyset < F < E(M) \big) \to \big( \emptyset < F \cup \{n+1\} < E(M_{e}) \big)$; mapping a matroid of type $S(1, 2, |F|+1)$ to one of type $S(1, 2, |F|+2)$. It has Schubert coefficient $1-p^{f}$ so we subtract this from $\zeta_{S'}$ for $S=S(1, 2, |F|+1)$ and add it to $\zeta_{S'}$ for $S=S(1, 2, |F|+2)$.
\item $\big( \emptyset < P_{e} < F < E(M) \big) \to \big( \emptyset < P_{e} \cup \{n+1\} < F \cup \{ n + 1 \} < E(M_{e})$; mapping a matroid of type $S(1, m+1, |F|+1)$ to one of type $S(1, m+2, |F|+2)$. This has Schubert coefficient $1$, so subtract 1 from $\zeta_{S'}$ for $S=S(1, m+1, |F|+1)$ and add 1 to $\zeta_{S'}$ for $S=S(1, m+2, |F|+2)$.
\item $\big( \emptyset < P < F < E(M) \big) \to \big( \emptyset < P < F \cup \{n+1\} \big)$; mapping a matroid of type $S(1, k+1, |F|+1)$ to one of type $S(1, k+1, |F|+2)$ with $P \subset F$ a parallel class other than $P_{e}$ contained in $F$ with $|P|=k$. It has Schubert coefficient 1, and $p^{f}_{k}$ exist for $k \neq m$, and $p^{f}_{m}$ exist for $k=m$, so if $k \neq m$ we subtract $p^{f}_{k}$ from $\zeta_{S}$ for $S=S(1, k+1, |F|+1)$ and add $p^{f}_{k}$ to $S=S(1, k+1, |F|+2)$; if $k=m$ we instead subtract $p^{f}_{m}-1$ to the former and add $p^{f}_{m} -1$ to the latter.
\end{enumerate}
\end{proof}
\subsubsection{Case 4} \label{sec case 4} Finally, we consider the case where parallel insertion does not alter the shape of the lattice, but instead inserts an element into an exisiting uncontained parallel class. Recall that we assume that the matroid we insert into has no loops or coloops, and that we denote the Schubert coefficients of the insertion by $\zeta_S$ for respective Schubert matroids $S$.
\begin{proposition}
Let $M$ be a matroid and  let $e \in P_{e}$ for some uncontained rank 1 cyclic flat $P_{e}$ of size $m$. Then
\begin{enumerate}
\item $\zeta_{S'} = \lambda_{S} -p_{k} + p_{k-1}$ for $S=S(1, k+1, m+k+1)$ and $k \neq m, m+1$.
\item $\zeta_{S'} = \lambda_{S} - p_{k}$ for $S=S(1, m+1, m+k+1)$ and $k \neq m$
\item $\zeta_{S'} = \lambda_{S} + p_{k}$ for $S=S(1, m+2, m+k+2)$ and $k \neq m$
\item $\zeta_{S'} = \lambda_{S} -p_{m+1} + 2p_{m} - 2$ for $S=S(1, m+2, 2m+2)$.
\item $\zeta_{S'} = \lambda_{S} - 2p_{m} + p_{m-1} + 2$ for $S=S(1, m+1, 2m+1)$.
\item $\zeta_{S'} = \lambda_{S} + p - 2$ for $S=S(1, m+1, m+2)$.
\item $\zeta_{S'} = \lambda_{S} + 2 - p$ for $S=S(1, m+2, m+3)$.
\item $\zeta_{S'} = \lambda_{S} + p_{k} - p_{k-1}$ for $S=S(1, 2, m+k+1)$ and $m \neq k, k-1$.
\item $\zeta_{S'} = \lambda_{S} + p_{m} - 1 - p_{m-1}$ for $S=S(1,2,2m+1)$.
\item $\zeta_{S'} = \lambda_{S} + p_{m+1} - p_{m}+1$ for $S=S(1,2,2m+2)$.
\end{enumerate}

\end{proposition}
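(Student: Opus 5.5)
I would argue exactly as for the Case 3 proposition: compare the cyclic chain lattices $\chainZ(M)$ and $\chainZ(M_e)$ directly and track which chains change Schubert type. By the remark describing cases (3)+(4), the lattice $\mathcal{Z}(M_e)$ has the same poset shape as $\mathcal{Z}(M)$. It is obtained by replacing every cyclic flat $\tilde F \supseteq P_e$ with $\tilde F \cup \{n+1\}$. Hence $\chainZ(M_e)\cong \chainZ(M)$ as posets, and all M\"obius values $\mu(\C,\hat{\bm{1}})$ are unchanged. The only effect is that some chains change their Schubert isomorphism type, because the cardinalities of some of their members grow by one. So $\zeta_{S'}-\lambda_S$ equals the total coefficient of chains moving into type $S'$ minus the total coefficient of chains moving out of type $S$. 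In particular the coefficient of $S(1,2,3)$ is untouched, consistent with its absence from the list.

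First I would fix the structure. Since $P_e$ is uncontained, the only cyclic flats containing $P_e$ are $P_e$ itself, the separable rank 2 flats $P\cup P_e$ for $P\in\mathcal{Z}_1(M)\setminus\{P_e\}$, and $E(M)$. Next I would list the chains through these flats with their coefficients, read off from the covering structure exactly as in Case 1 and Lemma \ref{lem lattice changes}:
\begin{itemize}
\item[(a)] $\emptyset<P_e<E$ has coefficient $1-(p-1)=2-p$ and moves from $S(1,m+1,m+2)$ to $S(1,m+2,m+3)$. This gives items (6) and (7).
\item[(b)] $\emptyset<P\cup P_e<E$ with $|P|=k$ has coefficient $-1$ and moves from $S(1,2,m+k+1)$ to $S(1,2,m+k+2)$.
\item[(c)] $\emptyset<P_e<P\cup P_e<E$ has coefficient $1$ and moves from $S(1,m+1,m+k+1)$ to $S(1,m+2,m+k+2)$.
\item[(d)] $\emptyset<P<P\cup P_e<E$ has coefficient $1$ and moves from $S(1,k+1,m+k+1)$ to $S(1,k+1,m+k+2)$.
\end{itemize}
Chains not through $P_e$ and ending in $E$ keep their type, since only the top grows and it already contains $n+1$ in the $S'$ convention.

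Finally I would aggregate by type. For a fixed Schubert matroid, the net change is the inflow from parallel classes of size $k-1$ minus the outflow from those of size $k$. This produces the differences $p_{k-1}-p_k$ in items (1) and (8). The main obstacle is the bookkeeping when different families land in the same Schubert type.
\begin{itemize}
\item When $k=m$, families (c) and (d) collide, which yields the factor $2p_m$.
\item Because $P_e$ itself is excluded from the count of other classes of size $m$, correction terms $\pm 1$ and $\pm 2$ appear. These give items (4), (5), (9) and (10).
\item The boundary index shifts $k=m\pm1$ and the type $S(1,m+1,m+2)$ also need separate treatment.
\end{itemize}
I would handle all of this by tabulating, for each family, the source and target index triples as functions of $k$ and $m$. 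I would then check each listed equation against the exceptions $k\in\{m-1,m,m+1\}$, taking care that $p_m$ counts $P_e$ while the number of other size-$m$ classes is $p_m-1$.
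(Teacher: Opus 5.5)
Your setup is the paper's: $\chainZ(M_e)\cong\chainZ(M)$, so all M\"obius values survive and only the Schubert types of chains through $P_e$ move. Your families (b), (c), (d) are exactly the three families in the paper's proof, with the same source and target types. Your (a), which the paper's proof omits, correctly gives items (6) and (7).

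The gap is the aggregation step you defer, and it cannot be completed, because these families do not sum to the listed formulas.

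\begin{itemize}
\item Your telescoping ``inflow from size $k-1$, outflow at size $k$'' works only for (b), whose middle index is the constant $2$.
\item In (d) the bottom flat $P$ does not contain $e$, so its middle index $k+1$ never changes. Chains coming from classes of size $k-1$ land in $S(1,k,m+k+1)$, never in $S(1,k+1,m+k+1)$. Nothing produces the $+p_{k-1}$ in item (1) or the $+p_{m-1}$ in item (5).
\item At $k=m$, (c) and (d) collide only at their common source $S(1,m+1,2m+1)$. Their targets are $S(1,m+2,2m+2)$ and $S(1,m+1,2m+2)$ respectively, so the $2p_m$ in item (4) has no source.
\item The genuine collisions occur at $k=m+1$. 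There the (c)-source $S(1,m+1,2m+2)$ meets the (d)-target from $k=m$, and both targets equal $S(1,m+2,2m+3)$.
\end{itemize}

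In fact the stated items are false. Take $M=U_{1,2}\oplus U_{1,2}\oplus U_{1,2}$ and insert parallel to an element of the first pair, so $m=2$ and $p=p_2=3$. All six maximal chains of $\chainZ(M)$ have type $S(1,3,5)$. In $\chainZ(M_e)$, two have type $S(1,4,6)$, two have type $S(1,3,6)$, and two remain $S(1,3,5)$. Only maximal chains can have types $S(1,4,6)$ or $S(1,3,6)$. Hence $\zeta_{S'}-\lambda_S=2$ for $S=S(1,4,6)$, where item (4) predicts $4$, and $\zeta_{S'}-\lambda_S=2$ for $S=S(1,3,6)$, where item (2) with $k=3$ predicts $0$.

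Item (1) fails similarly. Take component sizes $2,3,2$ and insert into a size-$2$ component. Item (1) with $k=4$ predicts $+1$ on $S(1,5,7)$, yet no chain of that type exists before or after the insertion.

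Write $\Delta_S=\zeta_{S'}-\lambda_S$ and $N_k=p_k-\delta_{k,m}$. Your four families, aggregated correctly, give
\[
\Delta_{S(1,k+1,m+k+1)}=-p_k,\qquad \Delta_{S(1,k+1,m+k+2)}=p_k\qquad (k\neq m,m+1),
\]
\[
\Delta_{S(1,m+1,2m+1)}=-2N_m,\qquad \Delta_{S(1,m+1,2m+2)}=\Delta_{S(1,m+2,2m+2)}=N_m-N_{m+1},\qquad \Delta_{S(1,m+2,2m+3)}=2p_{m+1}.
\]
Items (2) and (3) hold only for $k\neq m,m+1$, and items (6)--(10) are correct as stated. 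The second formula in the first line is missing from the statement entirely.

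So your proposal, as a proof of the statement as written, breaks exactly at the step you flagged as the main obstacle. Since the paper's proof lists the same source and target types, the error lies in the statement, not in your families.
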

\begin{proof}
As in the proof of Proposition \ref{case1prop}, we do not add any new elements or chains into $\mathcal{Z}(M)$, so we do not add any new elements or chains into $\chainZ(M)$, hence $S'(1,2,3) = S(1,2,3)$. The changes being made with the parallel insertion are, that for any $P \in \mathcal{Z}_{1}(M)$ with $|P|=k$ we have, 
\begin{enumerate}
\item $\big(\emptyset < P_{e} < P_{e} \cup P < E(M)\big) \to \big( \emptyset < P_{e} \cup \{n+1\} < P_{e} \cup P \cup \{n+1\} < E(M_{e})\big)$. This maps a chain of type $S(1, m+1, m+k+1)$ to one of type $S(1, m+2, m+k+2)$, whose Schubert coefficient was 1. There are $p_{k}$ of these chains if $k \neq m$; $p_{m}-1$ if $p=m$.
\item $\big( \emptyset < P < P \cup P_{e} < E(M)\big) \to \big(\emptyset < P < P \cup P_{e} \cup \{n+1\} < E(M_{e})\big)$. This maps a chain of type $S(1, k+1, m+k+1)$ to one of type $S(1, k+1, m+k+2)$. The counts of these correspond to those above, and their Schubert coefficients are also 1.
\item $\big( \emptyset < P \cup P_{e} < E(M) \big) \to \big(\emptyset <P \cup P_{e} \cup \{n+1\} < E(M_{e})\big)$. This maps a chain of type $S(1,2,m+k+1)$ to one of type $S(1, 2, m+k+2)$. The counts of these are the same as above, and their Schubert coefficient is $-1$ since they are covered by two chains.
\end{enumerate}
\end{proof}

\subsubsection{Exceptional cases.}\label{sec exceptional cases} Finally, we need to consider the case where there exist no chains of length $4$ in the lattice of cyclic flats of the matroid $M$ into which we insert. This is the case where there are no contained parallel classes, and less than 2 parallel classes. In particular, these are:
\begin{itemize}
\item A case $1$ or $2$ insertion on a matroid with no parallel classes.
\item A case $2$ or $3$ insertion on a matroid with one parallel class, which is uncontained. 
\item A case $4$ insertion on a matroid with one parallel class and no indecomposable cyclic flats.
\end{itemize}
These resolve in the following way:
\begin{enumerate}
\item A case 1 insertion on a matroid with no parallel classes increases the Schubert coefficient of $S(1,3,4)$ by $1$ and decreases the Schubert coefficient of $S(1,2,3)$ by 1.
\item A case 2 insertion on a matroid with no parallel classes, or one uncontained parallel class, into a $3$-cycle class of size $k$ increases the Schubert coefficient of $S(1,2,k+1)$ by $1$ and decreases the Schubert coefficient of $S(1,2,k)$ by $1$.
\item A case 3 insertion on a matroid with one uncontained parallel class of size $k$ increases the Schubert coefficient of $S(1,k+2, k+3)$ by $1$ and decreases the Schubert coefficient of $S(1,k+1, k+2)$ by $1$.
\item A case 4 insertion on a matroid with one parallel classes and no indecomposable flats increases the Schubert coefficient of $S(1,4,5)$ by 1 and decreases the Schubert coefficient of $S(1,3,4)$ by 1.
\end{enumerate}


\section{The polytope of all matroids for {$\mathscr{M}_{3, n}$}}\label{subsec poam rk 3}
In this section we construct the polytope of all matroids in rank 3. Each subsection transforms the Schubert computations done in the previous section into a matrix component. 

As in rank 2, we construct a matrix $\O_{3,n}$, such that $\Omega_{3,n} \coloneqq \conv (\O_{3,n})$ is the polytope of all matroids. 

\subsection{Loops and coloops}
In the preceeding sections, we described how to compute the polytope of all loopless, coloopless matroids of rank 3 on $E(M)$. We now cover the case where a matroid $M$ has loops, and the case where $M$ has coloops. This proceeds fairly similarly to the rank 2 analysis of the same setting. As such, we will significantly shorten the exposition here and refer to \ref{sec poam rk 2} for details.

\[\O^\ell_{3,n} \coloneqq \ \ \begin{array}{|c|}\hline
\O_{3,n-1} \\ \hline
\mathbf{0}\\\hline
\end{array}.
\]
Next, assume $M$ has at least one coloop $c$. Since coloops are not contained in any cyclic flat, the structure of the lattice of cyclic flats is preserved under deletion of $c$. Then, the deletion $M \setminus c$ is isomorphic to a matroid of rank $2$ on $n-1$, and conversely, every matroid of rank $3$ with at least one coloop can be constructed in this way. Thus, the matrix corresponding to all matroids of rank 3 on $[n]$ with coloops is 
\[\O^{c\ell}_{3,n} \coloneqq \ \ \begin{array}{|c|}\hline
\mathbf{0}\\\hline
\O_{2,n-1} \\ \hline
\end{array}.
\]

\subsection{Computing the polytope of all matroids of rank 3}

To set up the final computation, we now need to compute $\O_{3,4}$ by hand as our initial input for the recursive generation. We additionally compute $\O_{3,5}$ by hand to demonstrate our algorithm at work.

\begin{example}[$\O_{3,3}$ and $\mathcal{O}_{3,4}$]
    We observe that $\Omega_{3,3}$ is a point: The smallest rank $3$ matroid is $U_{3,3}$, which is a Schubert matroid and the only element of $\mathscr{M}_{3,3}$.

    Similarly, every element of  $\M_{3,4}$ is a Schubert matroid.   There are $3$ such Schubert matroids:
    \begin{itemize}
        \item First, we have that $\mathscr{M}_{2,3} = \mathscr{S}_{2,3}$. Every addition of a coloop to a representative of an isomorphism class in $\M_{2,3}$ now yields a new, non-isomorphic matroid isomorphism class in $\M_{3,3}$:
        \begin{itemize}
            \item $S(1,2)\oplus U_{1,1}$ is isomorphic to $S(1,2,4)$;
            \item $S(1,3)\oplus U_{1,1}$ is isomorphic to $S(1,3,4)$;
            \item $S(2,3)\oplus U_{1,1}$ is isomorphic to $S(2,3,4)$;
        \end{itemize}
        \item Next, we have the Schubert matroid $S(1,2,3)$; and 
        \item Finally, we have the matroid $U_{3,3}\oplus U_{0,1}$, i.e., the matroid $U_{3,3}$ with an added loop. This matroid is isomorphic to $S(2,3,4)$. 
    \end{itemize}
    Any insertion on a uniform matroid must be an insertion on $E(M)$, which provides a Schubert matroid, leaving us with no further ways to form an element of $\mathscr{M}_{3,4}$. 
\end{example}

We conclude by giving an overview over our final algorithm and our implementation.

\begin{algorithm}
\caption{\texttt{polytope\_of\_all\_matroids\_rk\_3}}\label{alg:cap}
\begin{algorithmic}
\Require A number $n\in \NN_{\geq 4}$.
\Ensure The matrix $\O_{3,n}$.
\State Initialise a matrix \texttt{M}
\For{$i \in \{n, ...,1\}$}
\State $L =$ \texttt{cycle\_covers(i)}
\For{$c \in L$}
\State \texttt{append!(M, unique!(schubert\_coeffs\_insep(c,n,i)))}
\EndFor
\EndFor
\State \texttt{append!(M, $\begin{bmatrix}
    \texttt{polytope\_of\_all\_matroids\_rk\_3(n-1)} \\0 
\end{bmatrix}$)}
\State \texttt{append!(M, $\begin{bmatrix}
    0 \\
    \texttt{polytope\_of\_all\_matroids\_rk\_2(n-1)}
\end{bmatrix}$)}
\State Return \texttt{M}
\end{algorithmic}
\end{algorithm}

In Algorithm \ref{alg:cap}, \texttt{cycle\_covers} computes all possible arrangements of inseparable flats as described in Remark \ref{rem algo insep}. The function \texttt{schubert\_coeffs\_insertion(c,n,i))} then computes all new Schubert coefficients arising in the \texttt{i} possible insertions into \texttt{c}, using the case-by-case computations discussed in Sections \ref{sec case 1} - \ref{sec exceptional cases}.

Finally, we combine all results in this section, and obtain our main theorem.

\begin{theorem}\label{thm main rk 3}
    Let $n \in \NN_{\geq 4}$, and let $\O_{3,n}$ be the matrix arising from Algorithm \ref{alg:cap}. The polytope $\Omega_{3,n} \coloneqq \conv ( \O_{3,n})$ is the polytope of all matroids of rank $3$ on $[n]$.
\end{theorem}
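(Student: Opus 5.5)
The proof will follow the rank 2 argument in Theorem \ref{thm schubert polytope}: show that the columns of $\O_{3,n}$ are exactly the vectors $\bm{o}_M$, one for each class $[M]\in\mathscr{M}_{3,n}$. Then $\conv(\O_{3,n})=\conv_{M}(\bm{o}_M)=\Omega_{3,n}$ by definition. Duplicate columns do no harm, since the convex hull ignores them. So I only need the \emph{set} of columns to equal $\{\bm{o}_M : [M]\in \mathscr{M}_{3,n}\}$; unlike rank 2, no counting identity is required.

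\textbf{Step 1: split $\mathscr{M}_{3,n}$ into three disjoint classes.} These are matroids with a loop, loopless matroids with a coloop, and matroids with neither.
\begin{itemize}
\item If $M$ has a loop, put it at $n$. The chains of $\mathcal{Z}(M)$ then correspond to those of $\mathcal{Z}(M\setminus n)$, with $n$ added to the minimal element. Each Schubert matroid $S(i_1,i_2,i_3)$ of $M\setminus n$ becomes $S(i_1+1,i_2+1,i_3+1)$, and the M\"obius values are unchanged. Under the chosen ordering of $\mathscr{S}_{3,n}$, this gives the block $\O^\ell_{3,n}$. By induction on $n$, with base case $\O_{3,4}$ from the example, this block runs over all such classes.
\item If $M$ has a coloop $c$, then $\mathcal{Z}(M)=\mathcal{Z}(M\setminus c)$ as lattices of sets, and $M\setminus c$ has rank $2$ on $n-1$ elements. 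The Schubert matroids of $M\setminus c$ extend by the coloop. This gives the block $\O^{c\ell}_{3,n}$, and Theorem \ref{thm schubert polytope} supplies all of $\mathscr{M}_{2,n-1}$.
\end{itemize}
In both bullets I must check that the stated embedding of $\mathscr{S}_{3,n-1}$ (respectively $\mathscr{S}_{2,n-1}$) into the coordinates of $\mathscr{S}_{3,n}$ is the one used in the block matrices. This is a bookkeeping verification on lattice paths, as in Remark \ref{schubertcount}.

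\textbf{Step 2: handle loopless, coloopless matroids.} Such an $M$ has a unique simplification $\operatorname{si}(M)$, which is a simple matroid. By Remark \ref{rem algo insep} and the subsequent lemma, its lattice of cyclic flats is an arrangement of inseparable flats produced by \texttt{cycle\_covers}. Its coefficients are given by Proposition \ref{prop schubert simple}. By the parallel insertion lemma, $M$ is obtained from $\operatorname{si}(M)$ by a finite sequence of insertions. I order the sequence so that insertions into coloops come last, as that lemma suggests.

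I then argue by induction on the number of insertions. At each step, the element $e$ falls into exactly one of cases (1)--(4) of Remark \ref{ecases}, or into one of the exceptional cases. The corresponding proposition of Sections \ref{sec case 1}--\ref{sec exceptional cases} updates the vector $\lambda$ of the current matroid to the vector $\zeta$ of $M_e$. The inputs $p_k,p_k^c,p_k^u,p^e_k,f,p^f_k$ are all read off from the current lattice of cyclic flats. The lattice is updated as described in the remark, so the recursion stays closed.

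Since \texttt{schubert\_coeffs\_insep} enumerates all insertion sequences into all covers with total ground set size $n$, every loopless, coloopless class appears. Conversely, each output column is $\bm{o}_{M'}$ for an actual matroid $M'$, because every insertion produces a genuine matroid. Hence the columns of this block are exactly these classes.

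\textbf{Main obstacle.} The hard part is justifying the case propositions themselves. I must show that the listed new, deleted and relabelled chains in $\chainZ(M_e)$ are exhaustive, and that the M\"obius values $\mu(\C,\hat{\bm 1})$ are as claimed. My approach is to compute $\mu$ by counting covers in the height-$\le 4$ chain lattice. Chains of length 4 are coatoms with $\mu=-1$. Length 3 chains get $\mu=k-1$, where $k$ is the number of length 4 chains refining them. The bottom chain is handled by Lemma \ref{lem lattice changes}.

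The delicate points are the following:
\begin{itemize}
\item matroids without a chain of length $4$, which are the exceptional cases;
\item insertions into coloops, which change which set is $E(M)$ and whether it is cyclic;
\item interactions when $|P|=m$ or $m\pm1$, which are the coincident index corrections in Cases 3 and 4.
\end{itemize}
I would check these by verifying that the updated vector has coordinate sum $1$. This holds because evaluating the indicator identity of Theorem \ref{thm: schubert_moebius} at a point of $\mathcal{P}(M)$ forces $\sum_S\lambda_S=1$. I would also cross-check against direct computation for $n\le 6$.
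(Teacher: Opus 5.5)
\textbf{Verdict: the structure matches the paper's, but Step 2 has a genuine gap.} Your architecture is exactly the paper's: a loop block, a coloop block via rank 2, and simplification plus parallel insertion for everything else, with duplicate columns harmless. The paper's proof of this theorem consists only of assembling these pieces. The gap is that Step 2 takes the case split of Remark \ref{ecases} and the update rules of Sections \ref{sec case 1}--\ref{sec exceptional cases} as given, and proposes to show ``that the M\"obius values are as claimed''. As written, those rules are not correct, so that step fails.

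\textbf{A concrete failure.} Let $M$ on $[7]$ have parallel classes $A=\{1,2\}$ and $B=\{3,4\}$, single points $5,6,7$, and $\operatorname{si}(M)=U_{3,5}$. Insert $8$ parallel to $5$.
\begin{itemize}
\item The element $5$ is not a coloop and lies in no rank 1 cyclic flat and no inseparable flat. So the smallest cyclic flat containing it is $E(M)$, and none of cases (1)--(4) of Remark \ref{ecases} applies. Since $\emptyset<A<A\cup B<E$ is a chain of length $4$, no exceptional case applies either. The same happens for any element of $U_{3,n}$.
\item The effect of this insertion on $\mathcal{Z}$ is exactly the one described in the proof of Proposition \ref{prop schubert case 1}. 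But item (3) of that proposition has the wrong sign.
\item Directly from the definition of $\mu$ (cf.\ Lemma \ref{lem lattice changes}), $\lambda_{S(1,2,3)}=1-\#\{\text{3-element chains}\}+\#\{\text{4-element chains}\}$. The $1+p$ new 3-element chains therefore lower this coefficient, and the $2p$ new 4-element chains raise it. The net change is $p-1$, not $1-p$; the proof assigns the opposite sign to each of the four chain families.
\item In the example, $\bm o_M=2S(1,3,5)-S(1,2,5)$. After insertion the cyclic flats are $\emptyset$, $A$, $B$, $\{5,8\}$, their three pairwise unions, and $[8]$. A direct count gives $\bm o_{M_e}=6S(1,3,5)-3S(1,3,4)-3S(1,2,5)+S(1,2,3)$, which is consistent with $M_e$ having $38$ bases.
\item The proposition instead returns $-1$ in the $S(1,2,3)$ coordinate, a vector with coordinate sum $-1$. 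The same discrepancy occurs when inserting into the coloop of $U_{1,2}\oplus U_{1,2}\oplus U_{1,1}$.
\end{itemize}
Separately, with coloop insertions ordered last, you apply cases 2--4 outside their stated coloopless hypothesis, and that extension also needs proof.

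\textbf{How to close the gap.} You need correct update rules, or none at all. For loopless $M$, $\mathcal{Z}(M)$ is determined by $\operatorname{si}(M)$ and the class sizes. It consists of:
\begin{itemize}
\item $\emptyset$;
\item the parallel classes of size at least $2$;
\item the rank 2 flats containing at least three points of $\operatorname{si}(M)$;
\item the rank 2 flats consisting of exactly two classes, both of size at least $2$;
\item the set of non-coloops.
\end{itemize}
In a chain lattice of height at most $4$, your cover counts then give $\lambda=1$ for 4-element chains, $1-k$ for a 3-element chain refined by $k$ of them, and the formula above for the bottom chain. This yields $\bm o_M$ for each pair (simple matroid from \texttt{cycle\_covers}, multiplicity vector), which is all Step 2 needs. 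Since the theorem is about the output of Algorithm \ref{alg:cap}, you must then still check that its insertion routine returns these values.

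\textbf{On your sanity checks.} The coordinate-sum test catches the sign error above, but it is only a necessary condition. For example, the case 2 list books the new chain $\emptyset<\{e,n+1\}<F\cup\{n+1\}<E(M_e)$, which has coefficient $1$, to $S(1,2,|F|+2)$ instead of $S(1,3,|F|+2)$; this error preserves the sum. Also, $\sum_S\lambda_S=1$ follows from $\sum_{\C_0\le D\le\hat{\bm 1}}\mu(D,\hat{\bm 1})=0$ for the bottom chain $\C_0$. It does not follow from evaluating at a point of $\mathcal{P}(M)$, where the symmetrized Schubert indicators need not equal $1$.

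\textbf{Minor point.} In your loop bullet, a loop lies in every cyclic flat, not only in the minimal one.
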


We demonstrate our algorithm by computing $\O_{3,5}$ manually. Over $[5]$, we see the first non-Schubert matroids of rank 3:

\begin{example}[$\mathcal{O}_{3,5}$]
There are $10$ Schubert matroids of rank $3$ on $[5]$, along with a trivial matroid with a coloop which is not a Schubert matroid, obtained by adding a coloop to the sole matroid of $\mathscr{M}_{2,4} \setminus \mathscr{S}_{2,4}$, discussed in Example \ref{thm schubert rk 2}. 

Next, we consider insertion on $\mathscr{M}_{3,4}$. Since insertion disregards loops and $S(2,3,4)$ is isomorphic to $U_{3,3}$ with an added loop, insertion on $S(1,2,3) = U_{3,4}$ or $S(2,3,4)$ produces a Schubert matroid.

There are two remaining Schubert matroids in $\mathcal{S}_{3,4}$. We now consider insertions on these. We obtain two Schubert and two non-Schubert matroids this way, and depict the lattices of cyclic flats of the latter in Figure \ref{fig 35 insertions}

\begin{itemize}
\item[(a)] The lattice of cyclic flats of $S(1,3,4)$  contains a singular parallel class of size 2. Here, insertion can either increase the size of the parallel class (causing the matroid to remain a Schubert matroid) or add a new parallel class, which creates a non-Schubert matroid. 
\item[(b)] The lattice of cyclic flats of $S(1,2,4)$ contains a single inseparable flat of size $3$. Here, insertion either adds a new parallel class contained in the inseparable flat, or adds a new uncontained parallel class. The former is a Schubert matroid, but the latter is not. 
\end{itemize}
\begin{figure}
\begin{tikzpicture}
\coordinate (A) at (0,-1);
\coordinate (B) at (-1,0);
\coordinate (C) at (1,0);
\coordinate (D) at (0,1);
\coordinate (E) at (0,2);
\draw (A) -- (B) -- (D) -- (E)
(A) -- (C) -- (D)
(A) node[fill=white]{$\emptyset$}
(B) node[fill=white]{$1 \ 2$}
(C) node[fill=white]{$3 \ 4$}
(D) node[fill=white]{$1 \ 2 \ 3 \ 4$}
(E) node[fill=white]{$[5]$};
\end{tikzpicture}
\hspace{2cm}
\begin{tikzpicture}
\coordinate (A) at (0,-1);
\coordinate (B) at (1,0);
\coordinate (C) at (-1,1);
\coordinate (D) at (0,2);
\draw (A) -- (B) -- (D)
(A) -- (C) -- (D)
(A) node[fill=white]{$\emptyset$}
(B) node[fill=white]{$1 \ 2$}
(C) node[fill=white]{$3 \ 4 \ 5$}
(D) node[fill=white]{$[5]$};
\end{tikzpicture}
\caption{Lattices of cyclic flats for the non-Schubert matroids obtained via insertion on $S(1,3,4)$ (left) and $S(1,2,4)$ (right).} \label{fig 35 insertions}
\end{figure}

We can now calculate the Schubert coefficients for the two non-Schubert matroids constructed above; the matroid arising in (a) is a case 1 insertion and the one in (b) is an exceptional case 2 insertion.
\begin{itemize}
\item[(a)] In case (a) we determine that $p_{2}=p=1$ and $p_{k}=0$ for all other $k$. Hence 
\begin{align*}
    \zeta_{S(1,2,3)'} = 0, \ \ \ \ \ \ \ \  \zeta_{S(1,3,4)'} = \lambda_{S(1,3,4)'} + 1- p-p_{2} = 0, \\ \zeta_{S(1,3,5)'} = 2p_{2}=2 \ \ \ \ \ \ \text{and} \ \ \ \ \ \ \zeta_{S(1,2,5)'} = -p_{2}=-1 
\end{align*}

\item[(b)] In case (b) we increase $\lambda_{S(1,3,4)}$ by 1 and decrease $\lambda_{S(1,2,3)}$ by 1, leaving us with the nonzero Schubert coefficients
\[
\lambda_{S(1,2,3)}=-1, \ \ \ \  \lambda_{S(1,3,4)}=1 \ \ \ \ \text{and} \ \ \ \ \lambda_{S(1,2,4)} = 1.
\]
\end{itemize}
Finally, the case $n=5$ is the first one to provide  nontrivial inseparable flats; $\{1, 2, 3\}$ and $\{ 3,4,5 \}$. From this lattice structure, in the notation of Proposition \ref{prop schubert simple} we have that $t =2$ and $t_k = (0,0,2,0...)$. Thus,  
\[\lambda_{S(1,2,3)} =1 - t = -1 \  \ \ \ \ \ \text{and} \ \ \ \ \ \ \lambda_{S(1,2,4)} = \lambda_{S(1,2,1+3)} = t_3 =  2 \]

In total, these three matroids, plus the consideration of the earlier matroid obtained by adding a coloop, gives the matrix
\begin{equation*}
\mathcal{O}_{3,5} = 
 \color{gray}
\begin{matrix}
S(3,4,5) \\
S(2,4,5) \\
S(2,3,5) \\
S(2,3,4) \\
S(1,4,5) \\
S(1,3,5) \\
S(1,3,4) \\
S(1,2,5) \\
S(1,2,4) \\
S(1,2,3) 
\end{matrix}
\color{black}
\left(
I_{10} \  \ \
\begin{matrix}
 0& 0 & 0 & 0\\
 0 & 0 & 0 & 0\\
 0 & 0 & 0 & 0\\
 0 & 0 & 0 & 0\\
 0  & 0 & 0 & 0\\
 0  &0 & 2 & 0\\
 0  & 1 & 0 & 0\\
 0  & 0 & -1 & 0\\
 2  & 1 & 0 & 2\\
 -1 &-1 & 0 & -1\\
\end{matrix}
\right)
\end{equation*}
where we notice that the matroid obtained by adding a coloop shares decomposition coefficients from the matroid with 2 inseparable flats.
\end{example}



\paragraph{Computations}
Computationally implementing Algorithm \ref{alg:cap}, our process to compute $\O_{3,n}$ terminates within a few minutes for $n\leq 10$, and with some additional patience for $n = 11$ using the code provided at \begin{center}
        \url{https://github.com/VictoriaSchleis/polytope_of_all_matroids}.
    \end{center} Due to the blow-up in the number of isomorphism classes of matroids, we expect the cases $n\geq 12$ to be computationally out of reach at this point in time. This is a memory problem, rather than an algorithmic one - the computation of all irreducible flat arrangements is already out of reach for $n=12$. This is not unexpected, rather it was surprising to us that the polytope can be computed as far as it can. We refer to the jupyter notebook available at the code source for a detailed explanation and documentation of the implementation. 

Similarly, the polytope of all matroids can be computed using the matrix $\O_{3,n}$ and the \texttt{convex\_hull}-functionality in \texttt{OSCAR} \cite{Oscar} for all $n\leq 10$. Computing all vertices of this polytope is both memory- and computationally heavy, but can be done with some patience for $\Omega_{3,n\leq 8}$. 
    
As we have done in rank 2, we record some of the resulting invariants in Table \ref{tab: rk 3}. 

\begin{table}
        \centering
        \begin{tabular}{c||c|c|c|c|c|c}
             \textbf{n}& 5 & 6 & 7 & 8 & 9 & 10 \\ \hline \hline 
             dimension & 10 & 20 & 35 & 56 & 84 & 120   \\ \hline  \# distinct Schubert expansions & 13 & 39 & 109 & 310 & 960  & 3291 \\ \hline
             \# vertices & 11 & 28 & 64 & 145 &  &    \\
        \end{tabular}
    \caption{The number of distinct Schubert expansions of matroids in $\M_{3,n}$ compared to the number of vertices of $\Omega_{3,n}$ for $3\leq n \leq 10$. Note that for $n\leq 8$, we recover the results of \cite[Table 1]{FerroniFink2025}.}
    \label{tab: rk 3}
\end{table}

 \bibliographystyle{amsalpha}
\bibliography{bibliography.bib}

	\vspace{0.25cm}
	\noindent
    \textsc{Narayan Collins, Durham University, Upper Mountjoy Campus,
		DH1 3LE Durham,
		United Kingdom} \\
	\emph{Email :} narayan.j.collins@durham.ac.uk
	\vspace{0.25cm}
	
	\noindent
	\textsc{Victoria Schleis, Durham University, Upper Mountjoy Campus,
		DH1 3LE Durham,
		United Kingdom} \\
	\emph{Email :} victoria.m.schleis@durham.ac.uk

\end{document}